\definecolor{lblue}{RGB}{0,110,152}
\definecolor{dred}{RGB}{171,67,53}
\newtheorem{theorem}{Theorem}
\newtheorem{corollary}[theorem]{Corollary}
\newtheorem{proposition}[theorem]{Proposition}
\newtheorem{lemma}[theorem]{Lemma}
\newtheorem{remark}[theorem]{Remark}
\newtheorem{property}[theorem]{Property}
\newcommand{\mendth}{\hfill \ensuremath{\vartriangle}}
\DeclareMathOperator*{\col}{col}
\DeclareMathOperator{\He}{Sym}
\DeclareMathOperator*{\diag}{diag}
\DeclareMathOperator{\eps}{\varepsilon}
\DeclareMathOperator{\E}{ \mathbb{E}}
\def\P{\mathbb{P}}
\def\E{\mathbb{E}}
\providecommand\X[1]{\boldsymbol{X_{#1}}}
\providecommand\phib{\boldsymbol{\emptyset}}
\providecommand{\blue}[1]{\color{black}{#1}\color{black}\hspace{0pt}}
\providecommand\X[1]{\boldsymbol{X_{#1}}}
\providecommand\phib{\boldsymbol{\emptyset}}
\newenvironment{proof}{{\it Proof :~}}{\hfill$\diamondsuit$\\}
\begin{document}

\title{In-Silico Proportional-Integral Moment Control of Stochastic Gene Expression}

\author{Corentin Briat and Mustafa Khammash\thanks{email: corentin@briat,info; mustafa.khammash@bsse.ethz.ch; url: www.briat.info; www.bsse.ethz.ch/ctsb}\\D-BSSE, ETH-Z\"urich}
\date{}


\maketitle

\begin{abstract}
The problem of controlling the mean and the variance of a species of interest in a simple gene expression is addressed. It is shown that the protein mean level can be globally and robustly tracked to any desired value using a simple PI controller that satisfies certain sufficient conditions. Controlling both the mean and variance however requires an additional control input, e.g. the mRNA degradation rate, and local robust tracking of mean and variance is proved to be achievable using multivariable PI control, provided that the reference point satisfies necessary conditions imposed by the system. Even more importantly, it is shown that there exist PI controllers that locally, robustly and simultaneously stabilize all the equilibrium points inside the admissible region. The results are then extended to the mean control of a gene expression with protein dimerization. It is shown that the moment closure problem can be circumvented without invoking any moment closure technique. Local stabilization and convergence of the average dimer population to any desired reference value is ensured using a pure integral control law. Explicit bounds on the controller gain are provided and shown to be valid for any reference value. As a byproduct, an explicit upper-bound of the variance of the monomer species, acting on the system as unknown input due to the moment openness, is obtained. The results are illustrated by simulation.

\noindent\textit{Keywords. Cybergenetics; optogenetics; in-silico control; cell populations; stochastic reaction networks; robustness}
\end{abstract}


\section{Introduction}

The regulation problem of single-cells and \blue{the regulation problem of} cell populations have attracted a lot of attention \blue{in} recent years because of their potential in applications such as in bioreactors \cite{Chubukov:16,Briat:17ACS}, disease treatment (homeostasis restoring \cite{Schukur:16}), etc. Various ways for controlling single cells or cell populations exist. \blue{Among these} is the so-called in-vivo control where controllers are implemented inside cells using biological elements (genes, RNA, proteins, etc.) using synthetic biology and bioengineering techniques. Theoretical works have suggested that certain motifs could achieve perfect adaptation for the controlled network. Notably, circuits relying on an annihilation reaction of the controller species have proven to be a credible solution to the regulation problem; see e.g. \cite{Klavins:10,Briat:15e,Qian:17,Olsman:17,Annunziata:17}. Other designs involve, for instance, phosphorylation cycles exhibiting an ultrasensitive behavior \cite{Cuba:17} or autocatalytic reactions \cite{Briat:16a,Briat:19:Logistic}. The alternative to in-vivo control is the so-called in-silico control in which control functions are moved outside the cells and are relegated to a digital computer \cite{Khammash:11,Uhlendorf:12,Menolascina:14,Fiore:16,Milias:16,Lugagne:17}. Actuation can be performed using light (optogenetics) or using chemical inducers (microfluidics) whereas measurements are mostly performed via the use of fluorescent molecules (such as the Green Fluorescent Protein -- or GFP) whose copy number can be estimated using time-lapse microscopy or flow cytometry.

In the current paper, we focus on the in-silico control of cell populations in the moments equation framework, for which we summarize and extend some of our previously obtained theoretical results. The paper is divided in three parts. The first part is devoted to the control of the mean protein copy number in a simple gene expression network across a cell population using a Proportional-Integral (PI) control strategy. The considered control input is the transcription rate of the mRNA and the measured output is the mean of the protein copy number across the cell population. Note that this measure is perfectly plausible as it can be computed from the fluorescence (or the protein copy number) distribution across the cell population. As the output of a PI controller can be negative, which would not correspond to any meaningful control input here, we resolve this problem by placing an ON/OFF nonlinearity \cite{Goncalves:00,Goncalves:07} between the controller and the system. Another solution to this problem is the use of an antithetic integral controller or positively regularized integral controllers; see e.g. \cite{Briat:15e,Briat:19:Positive}. The local exponential stability can be easily proven using a standard eigenvalue analysis, whereas the global  asymptotic stability is proved in the context of absolute stability using Popov' criterion; see e.g. \cite{Popov:61,Briat:12c,Guiver:15}. Additional results on the disturbance rejection, the robustness with delays and the generalization to arbitrary moment equations are also given.


The second part is devoted to the extension of the above problem where, \blue{in addition to controlling the protein mean}, we also aim at controlling the variance in the protein copy number across the cell population. We first show that a second control input, the degradation rate of the mRNA, needs to be considered in order to able to independently control the mean and the variance of the protein copy number across the cell population. We then identify the existence of a lower bound for the stationary variance that cannot be overcome by any in-silico control strategy using the moments as measured variables. This has to be contrasted with what is achievable using in-vivo control where the variance can be theoretically reduced below this value \cite{Briat:15e,Briat:18:Interface} by suitably adjusting a negative feedback gain. Using the transcription rate and the degradation rate of the mRNA as control inputs, we demonstrate that a multivariable PI feedback can be used to locally and robustly track any desired protein mean and variance set-point, provided this set-points satisfies a certain necessary condition imposed by the structure of the system. 

The third and last part of the paper is devoted to the control of the mean copy number of a protein dimer in a gene expression network with protein dimerization. Although \blue{this may seem} incremental, this network exhibits a considerable increase in its complexity \blue{due to the dimerization}. The main difficulty lies in the fact that the moments equations are \blue{no longer closed}, which forces us to work with a moment equation having some unknown, yet potentially measurable, inputs. A second difficulty is that the moment equation becomes nonlinear, which increases the complexity of the problem. A natural approach would be to close the moments using some moment closure method (see e.g. \cite{Hespanha:08b}) and to control the closed moment equations. However, because of the inaccuracy of moment closure schemes it is not guaranteed that the original moment equation will remain stable under the same conditions as the closed moments equation. We propose here an alternative approach whereby we exploit the ergodicity of the process and solve the control problem directly \blue{for} the open moment equations where the variance of the protein copy number acts as an input. We show that this input is necessarily bounded by some value that we explicitly characterize. Some local asymptotic stability conditions are also obtained. This approach suggests that the moment equation openness is not as problematic as in the simulation/analysis problem. \blue{Finally, simulated examples} are given for illustration. \\

\noindent\textbf{Outline.} Preliminary definitions and results are given in Section \ref{sec:prel}. The problem of controlling the mean protein copy number in a gene expression network is addressed in Section \ref{sec:mean} whereas the problem of controlling both the mean and the variance in the protein copy number is considered in Section \ref{sec:var}. Finally, the mean control of the dimer copy number in a gene expression network with dimerization is treated in Section \ref{sec:dimer}. Examples are given in the related sections.\\

\noindent\textbf{Notations.} The notation is pretty much standard. Given a random variable $X$, its expectation is denoted by $\E[X]$. For a square matrix $M$, $\He[M]$ stands for the sum $M+M^*$. Given a vector $v\in\mathbb{R}^n$, the notation $\diag(v)$ stands for a diagonal matrix having the elements of $v$ as diagonal entries.

\section{Preliminaries on stochastic reaction networks}\label{sec:prel}

We briefly introduce in this section stochastic reaction networks as well as essential models. For more details, the readers are referred to \cite{Anderson:15}. A stochastic reaction network is a system where $N$ molecular species $\X{1},\ldots,\X{N}$ interact with each others through $M$ reaction channels $R_1,\ldots,R_M$. Each reaction $R_k$ is described by a propensity function $w_k:\mathbb{Z}_{\ge0}\mapsto\mathbb{R}_{\ge0}$ and a stoichiometric vector $s_k\in\mathbb{Z}^N$. The propensity function $w_k$ is such that for all $\varkappa\in\mathbb{Z}_{\ge0}$ such that $\varkappa+s_k\notin\mathbb{Z}_{\ge0}$, we have that $w_k(\varkappa)=0$. Assuming homogeneous mixing, it can be shown that the process $(X_1(t),\ldots,X_N(t))_{t\ge0}$ is a Markov process that can be described by the so-called Chemical Master Equation (CME), or Forward Kolmogorov equation, given by
\begin{equation*}
  \dot{p}_{\varkappa_0}(\varkappa,t)=\sum_{k=1}^M\left[w_k(\varkappa-s_k)p_{\varkappa_0}(\varkappa-s_k,t)-w_k(\varkappa)p_{\varkappa_0}(\varkappa,t)\right]
\end{equation*}
where $p_{\varkappa_0}(\varkappa,t)=\P[X(t)=\varkappa|X(0)=\varkappa_0]$ and $\varkappa_0$ is the initial condition. Based on the CME, dynamical expressions for the first- and second-order moments may be easily derived and are given by
\begin{equation}\label{eq:moments_g}
\hspace{-3mm}\begin{array}{lcl}
    \dfrac{d \E[X(t)]}{dt}&=&S\E[w(X(t))],\\
    \dfrac{d \E[X(t)X(t)^T]}{dt}&=&\He[S\E[w(X(t))X(t)^T]]\\
    &&+S\diag\{\E[w(X(t))]\}S^T
\end{array}
\end{equation}
where $S:=\begin{bmatrix}
  s_1 & \ldots & s_M
\end{bmatrix}\in\mathbb{R}^{N\times M}$ is the stoichiometry matrix and $w(\varkappa):=\begin{bmatrix}
  w_1(\varkappa) & \ldots & w_M(\varkappa)
\end{bmatrix}^T\in\mathbb{R}^{M}$ the vector of propensity functions. When the propensity functions are affine (i.e. $w(\varkappa)=W\varkappa+w_0$, $W\in\mathbb{R}^{M\times N}$, $w_0\in\mathbb{R}^M$), then the above system can be rewritten in the form
\begin{equation}\label{eq:moments}
\begin{array}{rcl}
    \dfrac{d \E[X(t)]}{dt}&=&SW\E[X(t)]+Sw_0,\\
    \dfrac{d\Sigma(t)}{dt}&=&\He[SW\Sigma(t)]+S\diag(W\E[X(t)]+w_0)S^T
\end{array}
\end{equation}
where $\Sigma(t):=\E[(X(t)-\E[X(t)])(X(t)-\E[X(t)])^T]$ is the covariance matrix.  An immediate property of \eqref{eq:moments} is that it forms a closed system, unlike in the more general case \eqref{eq:moments_g} where the first and second order moments depend on higher-order ones whenever the propensity functions are of mass-action.

\section{Mean control of protein levels in a gene expression network}\label{sec:mean}

The objective of the current section is to give a clear picture of the mean control of the number of proteins using a simple \emph{positive PI controller}, i.e. a PI controller generating nonnegative control inputs. The considered control input is the transcription rate $k_r$ which can be externally actuated using, for instance, light-induced transcription \cite{Khammash:11}. It is shown in this section that a positive PI control law allows to achieve global and robust output tracking of the mean number of proteins.

\subsection{Preliminaries}

We consider in this section the following model for gene expression
\begin{equation}\label{eq:reacnet_gene}
\begin{array}{lccclclcccl}
   R_1&:&\phib&\stackrel{k_r}{\longrightarrow}&\X{1},&&  R_2&:&\X{1}&\stackrel{\gamma_r}{\longrightarrow}&\phib,\\
   R_3&:&\X{1}&\stackrel{k_p}{\longrightarrow}&\X{1}+\X{2},&&  R_4&:&\X{2}&\stackrel{\gamma_p}{\longrightarrow}&\phib
 \end{array}
\end{equation}
where $\X{1}$ denotes the mRNA and $\X{2}$ the associated protein. 
In vector form, the equations \eqref{eq:moments} rewrite
\begin{equation}\label{eq:mainsyst}
\left[\begin{array}{c}
    \dot{x}_1(t)\\
    \dot{x}_2(t)\\
    \hline
    \dot{x}_{3}(t)\\
    \dot{x}_{4}(t)\\
    \dot{x}_{5}(t)
  \end{array}\right]=\left[\begin{array}{c|c}
    A_{ee} & 0\\
    \hline
    A_{\sigma e} & A_{\sigma\sigma}
  \end{array}\right]\left[\begin{array}{c}
    x_1(t)\\
    x_2(t)\\
    \hline
     x_{3}(t)\\
    x_{4}(t)\\
    x_{5}(t)
  \end{array}\right]+\left[\begin{array}{c}
    B_e\\
    \hline
    B_\sigma
  \end{array}\right]k_r
\end{equation}
where the state variables are defined by $x_i:=\E[X_i]$, $i=1,2$, $x_3:=V(X_1)$, $x_4:=\textnormal{Cov}(X_1,X_2)$, $x_5:=V(X_2)$,
%
and the system matrices by
\begin{equation}\label{eq:mainsystm}
\begin{array}{lcl}
   A_{ee}&=&\begin{bmatrix}
    -\gamma_r & 0\\
    k_p & -\gamma_p
  \end{bmatrix},\ A_{\sigma e}=\begin{bmatrix}
    \gamma_r & 0\\
    0 & 0\\
    k_p & \gamma_p
  \end{bmatrix},\ B_e=\begin{bmatrix}
    1\\0
  \end{bmatrix},\\
  A_{\sigma\sigma}&=&\begin{bmatrix}
    -2\gamma_r & 0 & 0\\
    k_p & -(\gamma_r+\gamma_p) & 0\\
    0 & 2k_p & -2\gamma_p
  \end{bmatrix},\ B_\sigma=\begin{bmatrix}
    1\\
    0\\
    0
  \end{bmatrix},
\end{array}
\end{equation}
where $k_r>0$ is the transcription rate of DNA into mRNA, $\gamma_r>0$ is the degradation rate of mRNA, $k_p>0$ is the translation rate of mRNA into protein and $\gamma_p>0$ is the degradation rate of the protein.

We consider here the following restriction of system (\ref{eq:mainsyst}):
\begin{equation}\label{eq:musyst}
  \begin{bmatrix}
    \dot{x}_1(t)\\
    \dot{x}_2(t)
  \end{bmatrix}=A_{ee}\begin{bmatrix}
    x_1(t)\\
    x_2(t)
  \end{bmatrix}+B_eu(t)
\end{equation}
and the positive PI control law
\begin{equation}\label{eq:pi1}
  u(t)=\varphi\left(k_1(\mu_*-x_2(t))+k_2\int_0^t[\mu_*-x_2(s)]ds\right)
\end{equation}
where $\mu_*$ is the mean number of protein to track and the scalars $k_1,k_2$ the gains of the controller. The nonlinearity $\varphi(\cdot)$ can either be an ON/OFF nonlinearity $\varphi(u):=\max\{0,u\}$ \cite{Goncalves:07} or a saturated ON/OFF one $\varphi(u):=\min\{\max\{0,u\},\bar u\}$ where $\bar u>0$ is the maximum admissible value for the control input.

\begin{property}
  Given a constant reference $\mu_*\ge0$, the equilibrium point of the system (\ref{eq:musyst})-(\ref{eq:pi1}) is given by $x_2^*=\mu_*$ and
  \begin{equation}\label{eq:mupt}
     x_1^*=\dfrac{\mu_*\gamma_p}{k_p},\ u^*=\dfrac{\mu_*\gamma_p\gamma_r}{k_p}\text{\ and\ } I^*=\frac{u^*}{k_2}
  \end{equation}
  where $I^*$ is the equilibrium value of the integral term. When a saturated ON/OFF nonlinearity is used, for this equilibrium point to be meaningful, we need that $u^*\le\bar u$.
\end{property}
As expected, the integrator allows to achieve constant set-point tracking for the mean protein count regardless the values of the parameters of the system.

\subsection{Local stabilizability, stabilization and output tracking}

Since the equilibrium control input $u^*$ and the set-point $\mu_*$ are both positive, the nonlinearity $\varphi(\cdot)$ is not active in a sufficiently small neighborhood of the equilibrium point (\ref{eq:mupt}). The ON/OFF nonlinearity can hence be locally ignored and the local analysis performed on the corresponding linear system. Assuming first that the system parameters $k_p,\gamma_p,\gamma_r$ are exactly known, the following result on local nominal stabilizability and stabilization is obtained:
\begin{lemma}\label{lem:jdksjdlks}
  Given system parameters $k_p,\gamma_p,\gamma_r>0$, the system (\ref{eq:musyst}) is locally stabilizable using the control law (\ref{eq:pi1}).  Moreover, the equilibrium point (\ref{eq:mupt}) of the closed-loop system (\ref{eq:musyst})-(\ref{eq:pi1}) is locally asymptotically (exponentially) stable if and only if the conditions
  \begin{equation}\label{eq:nomcond}
     \blue{\dfrac{\mu_*\gamma_p\gamma_r}{k_p}\le\bar{u},}\ k_1>\dfrac{k_2}{\gamma_p+\gamma_r}-\dfrac{\gamma_p\gamma_r}{k_p},\text{ and }k_2>0
  \end{equation}
  hold.\mendth
\end{lemma}
\begin{proof}
  The closed-loop system (\ref{eq:musyst})-(\ref{eq:pi1}) is given by
  \begin{equation}\label{eq:aug}
\begin{bmatrix}
  \dot{x}_1(t)\\
  \dot{x}_2(t)\\
  \dot{I}(t)
\end{bmatrix}=\begin{bmatrix}
    -\gamma_r & -k_1 & k_2\\
    k_p & -\gamma_p & 0\\
    0 & -1 & 0
  \end{bmatrix}\begin{bmatrix}
  x_1(t)\\
  x_2(t)\\
  I(t)
\end{bmatrix}+\begin{bmatrix}
  k_1\\0\\1
\end{bmatrix}\mu_*
  \end{equation}
where $I$ is the integrator state of the controller. Local stabilizability is then equivalent to the existence of a pair $(k_1,k_2)\in\mathbb{R}^2$ such that the state matrix of the augmented system (\ref{eq:aug}) is Hurwitz stable, i.e. has poles in the open left-half plane. The Routh-Hurwitz criterion yields the conditions (\ref{eq:nomcond}) that define a nonempty subset of the plane $(k_1,k_2)$. System (\ref{eq:musyst}) is hence locally stabilizable using the PI control law (\ref{eq:pi1}) for any triplet of parameter values ${(k_p,\gamma_r,\gamma_p)\in\mathbb{R}_{>0}^3}$. As a consequence, the closed-loop system is locally asymptotically stable when the control parameters are located inside the stability region defined by the conditions (\ref{eq:nomcond}).
\end{proof}

In order to extend the above result to the uncertain case, we assume here that the system parameters $(k_p,\gamma_p,\gamma_r)$ belong to the set
\begin{equation}\label{eq:paramset}
  \mathcal{P}_\mu:=[k_p^-,k_p^+]\times[\gamma_p^-,\gamma_p^+]\times [\gamma_r^-,\gamma_r^+]
\end{equation}
where the parameter bounds $k_p^-\le k_p^+,\gamma_p^-\le \gamma_p^+$ and $\gamma_r^-\le \gamma_r^+$ are real positive numbers. We then obtain the following result:
\begin{lemma}\label{lem:robloc}
  The system (\ref{eq:musyst}) with uncertain constant parameters $(k_p,\gamma_p,\gamma_r)\in\mathcal{P}_\mu$ is robustly locally asymptotically (exponentially) stabilizable using the control law (\ref{eq:pi1}). Moreover, the equilibrium point (\ref{eq:mupt}) of the closed-loop system (\ref{eq:musyst})-(\ref{eq:pi1}) is locally robustly asymptotically (exponentially) stable if and only if the conditions
  \begin{equation}\label{eq:robcond}
    \blue{\dfrac{\mu_*\gamma_p^+\gamma_r^+}{k_p^-}\le\bar{u}},\ k_1>\dfrac{k_2}{\gamma_p^-+\gamma_r^-}-\dfrac{\gamma_r^-\gamma_p^-}{k_p^+},\text{ and }k_2>0
  \end{equation}
  hold. \blue{When $\bar{u}=\infty$, then $\gamma_p^+$ and $\gamma_r^+$ can be arbitrarily large and $k_p^-$ can be arbitrarily small. }  \mendth
\end{lemma}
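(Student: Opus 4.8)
The plan is to reduce the robust stability problem to the nominal one already handled in the previous lemma, by exploiting the monotonicity of the nominal stability conditions \eqref{eq:nomcond} with respect to the three uncertain parameters. The key observation is that the closed-loop matrix in \eqref{eq:aug} retains exactly the same structure for every parameter triplet $(k_p,\gamma_p,\gamma_r)\in\mathcal P_\mu$, so the characteristic polynomial and hence the Routh--Hurwitz conditions take the same algebraic form; only the numerical values of the coefficients change. Robust stability over the whole box $\mathcal P_\mu$ is therefore equivalent to requiring \eqref{eq:nomcond} to hold simultaneously for every admissible triplet.

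First I would write the nominal stability region \eqref{eq:nomcond} as the pair of inequalities $k_2>0$ and $k_1 > k_2/(\gamma_p+\gamma_r) - \gamma_p\gamma_r/k_p =: f(k_p,\gamma_p,\gamma_r)$. The second condition holds for all parameters in $\mathcal P_\mu$ if and only if it holds at the worst-case value, namely $k_1 > \sup_{\mathcal P_\mu} f$. The heart of the argument is then to locate this supremum. Since $k_2>0$, the term $k_2/(\gamma_p+\gamma_r)$ is decreasing in both $\gamma_p$ and $\gamma_r$, so it is maximized by taking $\gamma_p=\gamma_p^-$ and $\gamma_r=\gamma_r^-$, the smallest admissible degradation rates. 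The term $-\gamma_p\gamma_r/k_p$ is increasing in $k_p$ and decreasing in each of $\gamma_p,\gamma_r$, so it too is maximized at $k_p=k_p^+$, $\gamma_p=\gamma_p^-$, $\gamma_r=\gamma_r^-$. Because both constituent terms attain their maxima at the \emph{same} corner of the box, the supremum of $f$ is achieved there and equals $k_2/(\gamma_p^-+\gamma_r^-) - \gamma_r^-\gamma_p^-/k_p^+$. This yields precisely the robust condition \eqref{eq:robcond}.

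The stabilizability claim follows as in the nominal case: the robust region defined by \eqref{eq:robcond} is a nonempty subset of the $(k_1,k_2)$-plane (for instance any $k_2>0$ together with a sufficiently large $k_1$ lies in it), so a stabilizing gain pair exists. Finally, since the ON/OFF (or saturated) nonlinearity $\varphi$ is inactive near the equilibrium \eqref{eq:mupt} whenever $0<u^*$ (respectively $0<u^*\le\bar u$), the local analysis on the linearized system \eqref{eq:aug} governs the behavior of the full closed loop, so local robust asymptotic (exponential) stability is equivalent to \eqref{eq:robcond}.

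The main obstacle, and the only genuinely non-routine point, is verifying that the two competing terms in $f$ are maximized at the same vertex of the parameter box. This is what makes the worst case a single point rather than forcing a more delicate joint optimization; it hinges on the sign pattern of the partial derivatives of $f$ and on the fact that $\mathcal P_\mu$ is a product set with the smallest $\gamma_p,\gamma_r$ and largest $k_p$ available simultaneously. Once this monotonicity is established, everything else is a direct specialization of the previous lemma.
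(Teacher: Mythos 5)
Your proposal is correct and follows essentially the same route as the paper: the paper's proof likewise defines the lower-bound function $f(x,y,z)=\dfrac{k_2}{y+z}-\dfrac{yz}{x}$, observes it is increasing in $x=k_p$ and decreasing in $y=\gamma_p$ and $z=\gamma_r$, and concludes the supremum over $\mathcal{P}_\mu$ is attained at $(k_p^+,\gamma_p^-,\gamma_r^-)$, exactly your worst-case corner argument. Your write-up is merely a bit more explicit than the paper's (spelling out the necessity direction via attainment of the supremum and the non-emptiness of the robust gain region), but the core idea is identical.
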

\begin{proof}
\blue{The starting point is the lower bound for $k_1$ obtained in Lemma \ref{lem:jdksjdlks} given by $f(x,y,z):=\frac{k_2}{y+z}-\frac{yz}{x}$, $x,y,z,k_2>0$ and let ${\bar{f}:=\sup_{(x,y,z)\in\mathcal{P}_\mu}f(x,y,z)}$ such that $\mu_*yz/x\le\bar{u}$ for all $(x,y,z)\in\mathcal{P}_\mu$. Simple calculations show that $f(x,y,z)$ is increasing in $x$ and decreasing in $y,z$ over $(x,y,z)\in\mathcal{P}_\mu$. Hence, we have $\bar{f}=f(k_p^+,\gamma_p^-,\gamma_r^-)$ and $k_1>\bar{f}$ implies that $k_1>f(x,y,z)$ for all $(x,y,z)\in\mathcal{P}_\mu$. This concludes the proof.}
   \end{proof}

\subsection{Global stabilizability, stabilization and output tracking}

Local properties obtained in the previous section are generalized here to global ones.  Noting first that the nonlinear function $\varphi(\cdot)$ is time-invariant and belongs to the sector $[0,1]$, i.e. $0\le\varphi(x)/x\le 1$, $x\in\mathbb{R}$, stability can then be analyzed using absolute stability theory \cite{Khalil:02} and an extension of the Popov criterion \cite{Popov:61,Khalil:02} for marginally stable systems \cite{Jonsson:97,Fliegner:06}. We have the following result:
%
\begin{theorem}\label{th:global}
   Given system parameters $k_p,\gamma_p,\gamma_r>0$, then the equilibrium point  (\ref{eq:mupt}) of the closed-loop system (\ref{eq:musyst})-(\ref{eq:pi1}) is globally asymptotically stable if $u^*\le\bar u$, the conditions \eqref{eq:nomcond} hold and one of the following statements hold for some $q>0$
   \begin{enumerate}[(a)]
     \item $z_0(q)>0$ and $z_1(q)>0$, or
     \item $z_0(q)>0$, $z_1(q)<0$ and $z_1(q)^2-4z_0(q)<0$
   \end{enumerate}
   where
   \begin{equation}\label{eq:djsqdj0}
\begin{array}{lcl}
    z_0(q)&=&\gamma_r^2\gamma_p^2+k_p\left[\gamma_r(\gamma_pk_1-k_2+\gamma_pk_2q)-\gamma_pk_2\right]\\
    z_1(q)&=&\gamma_p^2+\gamma_r^2+k_p\left[(k_1(\gamma_p+\gamma_r)-k_2)q-k_1\right].\hfill\mendth
  \end{array}
   \end{equation}
\end{theorem}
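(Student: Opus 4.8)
The plan is to recast the closed loop \eqref{eq:aug} as a Lur'e interconnection of a linear plant with a sector nonlinearity and to invoke the Popov criterion, in the version adapted to marginally stable plants \cite{Jonsson:97,Fliegner:06} rather than the classical Hurwitz version \cite{Popov:61}, since the integrator contributes a pole at the origin. First I would translate the equilibrium \eqref{eq:mupt} to the origin via $\tilde x_1=x_1-x_1^*$, $\tilde x_2=x_2-x_2^*$, $\tilde I=I-I^*$, and introduce the shifted nonlinearity $\psi(\sigma):=\varphi(u^*+\sigma)-\varphi(u^*)$ acting on the deviation $\sigma=-k_1\tilde x_2+k_2\tilde I$ of the controller argument. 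The hypothesis $u^*\le\bar u$, combined with $u^*>0$ from \eqref{eq:mupt}, ensures the equilibrium lies strictly on the linear branch of $\varphi$, so that $\psi(0)=0$ and, exactly as for $\varphi$, $\psi$ belongs to the sector $[0,1]$. This puts the error dynamics in the standard form $\dot\xi=A\xi+B\psi(\sigma)$ with $\xi=(\tilde x_1,\tilde x_2,\tilde I)$ and $A,B$ read off from \eqref{eq:aug}.

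Next I would compute the loop transfer function. Arranging signs for the negative-feedback convention, the linear part is $G(s)=\dfrac{k_p(k_1s+k_2)}{s(s+\gamma_r)(s+\gamma_p)}$, which is stable apart from the simple pole at $s=0$. The Routh--Hurwitz conditions \eqref{eq:nomcond} found earlier supply the low-frequency hypothesis the marginal Popov criterion requires at this pole. With the sector upper bound equal to $1$, the criterion then asks for a multiplier $q>0$ such that $1+\mathrm{Re}[(1+jq\omega)G(j\omega)]\ge0$ for all $\omega\ge0$.

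I would then reduce this frequency inequality. Multiplying through by the positive quantity $|\,j\omega(j\omega+\gamma_r)(j\omega+\gamma_p)|^2$ and collecting powers of $\omega$, I expect the Popov inequality to become $\omega^2\big[\omega^4+z_1(q)\omega^2+z_0(q)\big]\ge0$ with $z_0,z_1$ precisely as in \eqref{eq:djsqdj0}, the $\omega^2$ prefactor being the footprint of the origin pole. Writing $\lambda=\omega^2\ge0$, it remains to guarantee $p(\lambda):=\lambda^2+z_1(q)\lambda+z_0(q)>0$ for $\lambda>0$. Since $p$ is an upward parabola with $p(0)=z_0(q)$, this holds exactly when $z_0(q)>0$ and $p$ has no positive root: either $z_1(q)>0$, so the vertex sits at $\lambda\le0$, which is case (a), or $z_1(q)<0$ together with the negative discriminant $z_1(q)^2-4z_0(q)<0$, which is case (b). The existence of a single $q>0$ realizing (a) or (b) therefore validates the Popov inequality and yields global asymptotic stability of \eqref{eq:mupt}.

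The main obstacle I anticipate is not the algebra but the rigorous justification of the marginal-stability step: the classical Popov theorem presupposes a Hurwitz plant, whereas here $A$ has an eigenvalue at the origin. I would need to verify the extra hypotheses of \cite{Jonsson:97,Fliegner:06}---detectability of the linear part, so the integrator state is reconstructed from $\sigma$, together with the sign/residue condition at the origin pole that is exactly encoded by \eqref{eq:nomcond}---and to argue radial unboundedness of the associated Lur'e--Postnikov Lyapunov function so that convergence is global rather than merely local. A secondary check, again resting on $u^*\le\bar u$, is that the saturated choice of $\varphi$ still keeps the shifted nonlinearity $\psi$ within the sector $[0,1]$.
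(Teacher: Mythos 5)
Your proposal is correct and takes essentially the same route as the paper's own proof: the same Lur'e decomposition with the marginally stable transfer function $H(s)=\dfrac{k_p(k_1s+k_2)}{s(s+\gamma_r)(s+\gamma_p)}$, the marginal Popov criterion of \cite{Jonsson:97,Fliegner:06} (the low-frequency hypothesis you attribute to \eqref{eq:nomcond} is exactly the paper's check $\lim_{s\to0}sH(s)=k_pk_2/(\gamma_r\gamma_p)>0$, i.e.\ $k_2>0$), and the identical reduction---your $\omega^2$ prefactor cancels against the pole at the origin---to positivity of $\bar{\omega}^2+z_1(q)\bar{\omega}+z_0(q)$ on $[0,\infty)$ with the same two-case parabola analysis yielding (a) and (b). Your explicit equilibrium shift and sector verification for the shifted nonlinearity $\psi$ simply make more explicit the paper's closing remark that $u^*>0$ keeps the equilibrium input out of the kernel of $\varphi$, which is the hypothesis under which \cite{Fliegner:06} delivers global asymptotic stability.
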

\begin{proof}
Accordingly to the absolute stability paradigm, the closed-loop system (\ref{eq:musyst})-(\ref{eq:pi1}) is rewritten as the \emph{negative} interconnection of the marginally stable LTI system
\begin{equation}
  H(s)=\dfrac{k_p(k_1s+k_2)}{s(s+\gamma_r)(s+\gamma_p)}
\end{equation}
and the static nonlinearity $\varphi(\cdot)$; \blue{i.e. $u=-\varphi\circ Hu$}. We assume in the following that $k_2>0$, which is a necessary condition for local asymptotic stability of the equilibrium point (\ref{eq:mupt}). Since $\lim_{s\to0}sH(s)=\frac{k_pk_2}{\gamma_r\gamma_p}>0$, then the Popov criterion \cite{Popov:61,Fliegner:06} can be applied. It states that system (\ref{eq:musyst}) is absolutely stabilizable using controller (\ref{eq:pi1}) if there exist $(k_1,k_2)\in\mathbb{R}^2$ and $q\ge0$ such that the condition
\begin{equation}\label{eq:cost}
F(j\omega,q):=\Re\left[(1+qj\omega)H(j\omega)\right]>-1
\end{equation}
holds for all $\omega\in\mathbb{R}$. In order to check this condition, first rewrite $F(j\omega,q)$ as
\begin{equation}
  F(j\omega,q)=\dfrac{N_0(\omega)}{D(\omega)}+q\dfrac{N_1(\omega)}{D(\omega)}
\end{equation}
where
\begin{equation}
  \begin{array}{lcl}
        N_0(\omega)&=&k_p\left[k_1(\gamma_r\gamma_p-\omega^2)-k_2(\gamma_r+\gamma_p)\right]\\
        N_1(\omega)&=&k_p\left[k_1\omega^2(\gamma_r+\gamma_p)+k_2(\gamma_p\gamma_r-\omega^2)\right]\\
        D(\omega)&=&(\omega^2+\gamma_r^2)(\omega^2+\gamma_p^2).
  \end{array}
\end{equation}
Since $D(\omega)>0$ for all $\omega\in\mathbb{R}$, then the condition (\ref{eq:cost}) is equivalent to
  $N_0(\omega)+q N_1(\omega)+D(\omega)>0$
for all $\omega\in\mathbb{R}$. Letting $\bar{\omega}:=\omega^2$, we get
  $Z(\bar{\omega}):=\bar{\omega}^2+z_1(q)\bar{\omega}+z_0(q)>0$
for all $\bar{\omega}\in[0,\infty)$ and where $z_0(q)$ and $z_1(q)$ are as in \eqref{eq:djsqdj0}. The problem therefore essentially becomes a positivity analysis of the polynomial $Z(\bar{\omega})$ over $[0,\infty)$. This polynomial is positive if and only if either $z_0(q)>0$ and $z_1(q)>0$ or $z_0(q)>0$, $z_1(q)<0$ and $z_1(q)^2-4z_0(q)<0$. To prove global asymptotic stability, it is enough to note that since $u^*>0$, we have $\varphi(u^*)=u^*$ and the control input equilibrium value does not lie in the kernel of $\varphi$. According to \cite{Fliegner:06}, this allows to conclude on the global asymptotic stability of the equilibrium point (\ref{eq:mupt}). The proof is complete.
\end{proof}
As the conditions of Theorem \ref{th:global} are implicit in nature, we provide here some more useful conditions
\begin{corollary}
   Given system parameters $k_p,\gamma_p,\gamma_r>0$, then the equilibrium point  (\ref{eq:mupt}) of the closed-loop system (\ref{eq:musyst})-(\ref{eq:pi1}) is globally asymptotically stable if the following conditions
   \begin{equation}\label{eq:djsqdj}
      \blue{\dfrac{\mu_*\gamma_p\gamma_r}{k_p}\le\bar{u},}\ k_1>\dfrac{k_2}{\gamma_p+\gamma_r}\text{ and }k_2>0
   \end{equation}
   hold.\mendth
\end{corollary}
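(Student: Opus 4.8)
The plan is to obtain these explicit conditions as a special case of Theorem \ref{th:global}, by exhibiting for the given gain pair a value of the Popov multiplier $q>0$ for which statement (a) of that theorem holds. First I would check that the two conditions in \eqref{eq:djsqdj} imply the nominal conditions \eqref{eq:nomcond} required by Theorem \ref{th:global}: since $\gamma_p\gamma_r/k_p>0$, the bound $k_1>k_2/(\gamma_p+\gamma_r)$ is strictly stronger than $k_1>k_2/(\gamma_p+\gamma_r)-\gamma_p\gamma_r/k_p$, and $k_2>0$ is common to both. Hence all the local-stability hypotheses of the theorem are automatically satisfied.

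The key observation is that both $z_0(q)$ and $z_1(q)$ in \eqref{eq:djsqdj0} are affine functions of $q$, so I would simply analyse the signs of their slopes. For $z_0$, the coefficient of $q$ is $k_p\gamma_r\gamma_p k_2$, which is strictly positive whenever $k_2>0$; hence $z_0(q)\to+\infty$ as $q\to+\infty$. For $z_1$, the coefficient of $q$ is $k_p\bigl(k_1(\gamma_p+\gamma_r)-k_2\bigr)$, and this is precisely where the second condition in \eqref{eq:djsqdj} enters: the inequality $k_1>k_2/(\gamma_p+\gamma_r)$ is equivalent to $k_1(\gamma_p+\gamma_r)-k_2>0$, so this slope is also strictly positive and $z_1(q)\to+\infty$.

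Since both affine functions have strictly positive slopes, there is a threshold $q^\star>0$ beyond which $z_0(q)>0$ and $z_1(q)>0$ hold simultaneously, so any $q>q^\star$ realises statement (a) of Theorem \ref{th:global}. Invoking that theorem — whose remaining hypothesis $u^*\le\bar u$ is either vacuous for the pure ON/OFF nonlinearity or carried over as a standing assumption — then yields the claimed global asymptotic stability of the equilibrium point \eqref{eq:mupt}.

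I do not expect a serious obstacle: the argument reduces to a sign analysis of two linear functions of $q$. The only point requiring a little care is confirming that statement (a), rather than (b), is the branch to target, i.e. that pushing $q$ large does not inadvertently drive $z_1$ negative; the positive-slope computation above settles this directly, so no discriminant inspection as in (b) is needed.
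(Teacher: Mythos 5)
Your proposal is correct and follows essentially the same route as the paper: its proof likewise observes that under $k_2>0$ and $k_1(\gamma_p+\gamma_r)-k_2>0$, both $z_0(q)$ and $z_1(q)$ can be made positive by choosing $q$ sufficiently large, so that case (a) of Theorem \ref{th:global} applies. Your explicit slope computations and your check that \eqref{eq:djsqdj} implies the nominal conditions \eqref{eq:nomcond} simply spell out details the paper leaves implicit.
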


\begin{proof}
This result can be proven by noticing that when both the conditions $k_2>0$ and $k_1(\gamma_p+\gamma_r)-k_2>0$ hold, then $z_0(q)$ and $z_1(q)$ can both be made positive provided that  $q\ge0$ is chosen sufficiently large, proving then that the equilibrium point (\ref{eq:mupt}) is globally stable when these conditions are met.
\end{proof}
\blue{\begin{remark}
  It is interesting to note that the local and global stability conditions coincides in the limit $k_p\to\infty$. This suggests that when the DC-gain of the system increases, the global conditions become less and less conservative. Note, however, that the conditions are likely to be conservative due to the fact that the sector nonlinearity includes negative values as well as any other type of nonlinearity within that sector.
\end{remark}}

It is immediate to obtain the following extension to the uncertain case:
\begin{lemma}
   Given system parameters $(k_p,\gamma_p,\gamma_r)\in\mathcal{P}_\mu$, then the equilibrium point  (\ref{eq:mupt}) of the closed-loop system (\ref{eq:musyst})-(\ref{eq:pi1}) is globally robustly asymptotically stable if the following conditions
   \begin{equation}\label{eq:djsqdj2}
     \blue{\dfrac{\mu_*\gamma_p^+\gamma_r^+}{k_p^-}\le\bar{u},}\  k_1>\dfrac{k_2}{\gamma_p^-+\gamma_r^-},\text{ and }k_2>0
   \end{equation}
   hold. \blue{When $\bar{u}=\infty$, then $\gamma_p^+$ and $\gamma_r^+$ can be arbitrarily large and $k_p^-$ can be arbitrarily small. }  \mendth
\end{lemma}
%

\subsection{Generalization of the global result to any moment equation}

We consider here an arbitrary moment equation
\begin{equation}\label{eq:momentA}
\begin{array}{rcl}
  \dot{x}(t)&=&Ax(t)+Bu(t)\\
  y(t)&=&Cx(t)
\end{array}
\end{equation}
where $A\in\mathbb{R}^n$ is Metzler, $B\in\mathbb{R}^{n\times m}_{\ge0}$ and $C\in\mathbb{R}^{m\times n}_{\ge0}$. We also assume that we have as many PI controllers than input/output pairs and that
\begin{equation}\label{eq:momentPI}
  u(t)=\varphi\left(K_1(\mu_*-y(t))+K_2\int_0^t(\mu_*-y(s))ds\right)
\end{equation}
where $K_1$ and $K_2$ are the (matrix) gains of the controller. We have the following result:
\begin{theorem}
  Let us consider the moment equation \eqref{eq:momentA} with the controllers \eqref{eq:momentPI} and assume that the set-point $(\mu_*^1,\ldots,\mu_*^m)$ is achievable (i.e. there is a positive $u^*\le\bar u$ such that $y^*=\mu_*$). Then, the unique equilibrium point of the closed-loop system \eqref{eq:momentA}-\eqref{eq:momentPI} is globally asymptotically stable if there exist a positive semidefinite matrix $N$ and an invertible matrix $Z$ such that
  \begin{equation}
    \He[I_m+(I_m+j\omega N)Z G(j\omega)Z^{-1}]>0 \textnormal{ for all }\omega\in\mathbb{R}
  \end{equation}
  where $G(s)=(K_1+K_2/s)C(sI-A)^{-1}B$. Alternatively, this is equivalent to the existence of symmetric positive definite real matrices $P$ and $\Gamma$ and a symmetric positive semidefinite matrix $\bar N$ such that the matrix
  \begin{equation}\label{eq:LMI}
    \begin{bmatrix}
      A_a^TP+PA_a & PB_a-C_a^T\Gamma-(\bar NC_aA_a)^T\\
      \star & -2\Gamma-\He[\bar{N}C_aB_a]
    \end{bmatrix}
  \end{equation}
  is negative definite where
  \begin{equation}
    \begin{bmatrix}
      A_a & \vline & B_a\\
      \hline
      C_a & \vline & 0
    \end{bmatrix}=\begin{bmatrix}
      A & 0 & \vline & B\\
      C & 0 & \vline & 0\\
      \hline
      K_1C & K_2 & \vline & 0
    \end{bmatrix}.
  \end{equation}
\end{theorem}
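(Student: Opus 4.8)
The plan is to recast the closed-loop system \eqref{eq:momentA}-\eqref{eq:momentPI} as a Lur'e system and to apply a multivariable version of the Popov criterion, mirroring the proof of Theorem \ref{th:global} in the matrix setting. First I would pass to deviation coordinates around the equilibrium. Achievability guarantees an equilibrium with $y^*=\mu_*$, $u^*\in(0,\bar u]$ and integrator value $I^*$; uniqueness follows because the integrators pin $y^*=\mu_*$ and, by achievability, the corresponding $u^*$ is then determined. Writing $\tilde x=x-x^*$ and so on, the constant reference $\mu_*$ cancels and the loop becomes the negative interconnection of the augmented LTI block with transfer function $G(s)=(K_1+K_2/s)C(sI-A)^{-1}B$ and the shifted, decoupled static nonlinearity $\psi(\cdot):=\varphi(\cdot+w^*)-u^*$, where $w^*$ is the equilibrium argument of $\varphi$. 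Since every component of $\varphi$ has slope in $[0,1]$ and $u^*$ lies in the interior of the range of $\varphi$ (being positive and below saturation), $\psi$ stays in the sector $[0,1]$ and does not vanish identically. A block computation of $(sI-A_a)^{-1}$, which is block lower-triangular, then confirms that $(A_a,B_a,C_a)$ realizes $G$, i.e. $G(s)=C_a(sI-A_a)^{-1}B_a$, the integrator block of $A_a$ producing the factor $K_2/s$.

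Second, I would invoke the multivariable Popov criterion for marginally stable systems \cite{Popov:61,Jonsson:97,Fliegner:06} applied to the sector $[0,1]$ nonlinearity $\psi$. It yields absolute stability whenever there exist a Popov multiplier $I_m+j\omega N$ with $N\succeq0$ and a compatible loop transformation $Z$ such that $\He[I_m+(I_m+j\omega N)ZG(j\omega)Z^{-1}]\succ0$ for all $\omega\in\mathbb{R}$, the term $I_m$ encoding the upper sector bound, $N$ the Popov multiplier coefficient, and $Z$ the scaling freedom that lowers conservatism. As in Theorem \ref{th:global}, it is the integrator behaviour $\lim_{s\to0}sG(s)$ that legitimises the use of the criterion on this marginally stable plant.

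Third, and this is the main technical step, I would convert the frequency-domain inequality into the LMI \eqref{eq:LMI} via the Kalman--Yakubovich--Popov lemma. Substituting $G(j\omega)=C_a(j\omega I-A_a)^{-1}B_a$, the static part $\He[I_m+ZG(j\omega)Z^{-1}]$ generates the blocks carrying $P$ and $\Gamma$, with $\Gamma\succ0$ absorbing the scaling induced by $Z$. The Popov part $j\omega N\,ZG(j\omega)Z^{-1}$ is treated through the state-space identity $j\omega(j\omega I-A_a)^{-1}B_a=B_a+A_a(j\omega I-A_a)^{-1}B_a$, whence $j\omega G(j\omega)=C_aB_a+C_aA_a(j\omega I-A_a)^{-1}B_a$; this is exactly the origin of the terms $(\bar NC_aA_a)^T$ in the off-diagonal block and $\He[\bar NC_aB_a]$ in the $(2,2)$ block, with $\bar N\succeq0$ standing in for $N$ after the congruence. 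Applying the strict KYP lemma, whose controllability/stabilizability hypothesis on $(A_a,B_a)$ is supplied by the integrator augmentation together with $K_2$, then establishes that solvability of the FDI in $(N,Z)$ is equivalent to the existence of $P\succ0$, $\Gamma\succ0$ and $\bar N\succeq0$ making \eqref{eq:LMI} negative definite.

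Finally I would close the argument as in the scalar case: because $u^*>0$, the equilibrium argument of the nonlinearity does not lie in the kernel of $\varphi$, so absolute stability of the shifted loop upgrades to global asymptotic stability of the unique equilibrium of \eqref{eq:momentA}-\eqref{eq:momentPI} \cite{Fliegner:06}. The points I expect to be delicate are (i) checking that the sector structure of the decoupled nonlinearity is preserved under a general invertible $Z$, which may force $Z$ to commute with $\varphi$ or restrict it to a scaling class, and (ii) the bookkeeping in the KYP step that identifies the precise change of variables $(N,Z)\mapsto(\bar N,\Gamma)$ making the two formulations equivalent rather than merely one implying the other.
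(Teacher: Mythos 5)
Your proposal follows essentially the same route as the paper's proof: the multivariable Popov criterion with scalings applied to the shifted sector-$[0,1]$ nonlinearity, then the Kalman--Yakubovich--Popov lemma, and finally a congruence transformation --- the paper uses $\diag(I_n,Z)$ together with the changes of variables $\bar N=Z^TNZ$ and $\Gamma=Z^TZ$, which is exactly the bookkeeping you flagged as delicate in your point (ii) and which makes the two formulations equivalent rather than one-directional. Your write-up is in fact more explicit than the paper's (equilibrium shift, sector verification, and the upgrade to global asymptotic stability via the kernel argument are left implicit there), so no gap needs to be reported.
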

\begin{proof}
  This follows from the multivariable Popov criterion with the use of scalings (see e.g. \cite{Heath:09}). The LMI condition can be obtained by noticing that the frequency domain condition is equivalent to saying that the system $I_m+(I_m+sN)Z G(s)Z^{-1}$ is strictly positive real. From the Kalman-Yakubovich-Popov Lemma, this is equivalent to saying that the matrix
  \begin{equation}
    \begin{bmatrix}
      A_a^TP+PA_a & PBZ^{-1}-(ZC_a+\bar NZC_aA_a)^T\\
      \star & -2\Gamma-\He[\bar{N}ZC_aB_aZ^{-1}]
    \end{bmatrix}
  \end{equation}
  is negative definite. Performing a congruence transformation with respect to the matrix $\diag(I_n,Z)$ yields the condition \eqref{eq:LMI} where we have used the changes of variables $\bar N=Z^TNZ$ and $\Gamma=Z^TZ$. The proof is completed.
\end{proof}
While it may be difficult to analytically check these conditions in the general case, they can be numerically checked using semidefinite programming \cite{Sturm:01a}. Note, however, that when the gains $K_1$ and $K_2$ of the controller are not fixed a priori, then the problem becomes nonlinear and ad-hoc iterative methods may need to be considered to find suitable gains. Note that this problem relates to the design of a static output feedback controller, some instances being known to be NP-hard \cite{Blondel:97}.

\subsection{Input disturbance rejection}

It seems important to discuss disturbance rejection properties of the closed-loop system. For instance, basal transcription rates can be seen as constant input disturbances that need to be rejected. However, because of the positivity requirement for the control input, the rejection of constant input disturbances is only possible when they remain within certain bounds.
\begin{lemma}
  Given system parameters $k_p,\gamma_p,\gamma_r>0$, the control law (\ref{eq:pi1}) globally rejects constant input disturbances $\delta_u$ that satisfy
  \begin{equation}\label{eq:conddist1}
    \dfrac{\gamma_p\gamma_r}{k_p}\mu_*-\bar{u}\le\delta_u\le\dfrac{\gamma_p\gamma_r}{k_p}\mu_*
  \end{equation}
  provided that the controller gains satisfy conditions (\ref{eq:djsqdj}).\mendth
\end{lemma}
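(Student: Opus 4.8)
The plan is to reduce the disturbed problem to the disturbance-free global result already established in Theorem~\ref{th:global} and its Corollary. First I would incorporate the constant disturbance into the state equation by replacing the first line of \eqref{eq:musyst} with $\dot{x}_1(t)=-\gamma_r x_1(t)+u(t)+\delta_u$, leaving the controller \eqref{eq:pi1} unchanged. Computing the equilibrium of the resulting closed loop, the integral action forces $x_2^*=\mu_*$ (this is already the statement that the disturbance is rejected at steady state), together with $x_1^*=\gamma_p\mu_*/k_p$ and the shifted equilibrium input $u^*=\gamma_p\gamma_r\mu_*/k_p-\delta_u$. The key observation is that the admissibility bounds \eqref{eq:conddist1} are exactly equivalent to $0\le u^*\le\bar u$, so that the nonlinearity is unsaturated at the equilibrium, i.e. $\varphi(u^*)=u^*$.

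Next I would translate the equilibrium to the origin via $\tilde x_1:=x_1-x_1^*$, $\tilde x_2:=x_2-\mu_*$ and $\tilde I:=I-u^*/k_2$, where $I$ denotes the integrator state. Because the disturbance and $u^*$ cancel in the first equation at equilibrium, the shifted dynamics take exactly the same form as the disturbance-free closed loop, the only change being that $\varphi$ is replaced by the shifted nonlinearity $\psi(\xi):=\varphi(\xi+u^*)-u^*$ acting on $\xi=-k_1\tilde x_2+k_2\tilde I$. A short case analysis over the linear, lower-saturated and (for the saturated variant) upper-saturated branches of $\varphi$ shows that, whenever $0\le u^*\le\bar u$, the map $\psi$ still passes through the origin and remains in the sector $[0,1]$; moreover $\xi=0$ falls in the strictly increasing branch, so the equilibrium does not lie in the kernel of $\psi$.

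With these facts in hand, the shifted system is an instance of the negative interconnection of the marginally stable transfer function $H(s)$ and a sector $[0,1]$ nonlinearity analysed in Theorem~\ref{th:global}, and the non-degeneracy of $\psi$ at the origin is precisely the condition required by \cite{Fliegner:06} to upgrade boundedness to global asymptotic stability. Hence, under the gain conditions \eqref{eq:djsqdj}, the shifted origin is globally asymptotically stable, which is to say $x_2(t)\to\mu_*$ for every admissible $\delta_u$; this is exactly the claimed global rejection.

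I expect the main obstacle to be the careful verification that the shifted nonlinearity $\psi$ is globally sector $[0,1]$ and non-degenerate at the origin for both admissible choices of $\varphi$, since this is what allows us to invoke the marginal Popov criterion verbatim rather than re-deriving it; the bounds \eqref{eq:conddist1} enter precisely here, by guaranteeing $\varphi(u^*)=u^*$ and $\psi'(0)=1\neq0$. Once this reduction is justified, the remaining stability argument is inherited directly from the disturbance-free analysis and no new frequency-domain computation is needed.
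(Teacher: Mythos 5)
Your proposal is correct and takes essentially the same route as the paper: the paper's proof consists only of computing the shifted equilibrium input $u_\delta^*=\tfrac{\gamma_p\gamma_r}{k_p}\mu_*-\delta_u$ and observing that \eqref{eq:conddist1} is exactly the admissibility condition $0\le u_\delta^*\le\bar u$, with the coordinate shift, sector-$[0,1]$ verification of $\psi$, and appeal to the disturbance-free global result under \eqref{eq:djsqdj} left implicit — steps you spell out explicitly. The one caveat (shared with the paper, whose statement includes the endpoints of \eqref{eq:conddist1}) is that at $u^*=0$ or $u^*=\bar u$ the shifted nonlinearity is not strictly increasing at the origin, so your claim $\psi'(0)=1$ is valid only on the open interval.
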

\begin{proof}
  In presence of constant input disturbances, the equilibrium value of the control input is given by
  \begin{equation}
    u_\delta^*:=\dfrac{\gamma_p\gamma_r}{k_p}\mu_*-\delta_u.
  \end{equation}
  This value needs to be nonnegative in order to be driven by the on-off nonlinearity $\varphi$, which is the case if and only if condition (\ref{eq:conddist1}) holds.
  \end{proof}

The above result readily extends to the uncertain case:
\begin{lemma}
  Assume $(k_p,\gamma_p,\gamma_r)\in\mathcal{P}_\mu$, the control law (\ref{eq:pi1}) satisfying conditions (\ref{eq:djsqdj2}) globally and robustly rejects constant input disturbances if and only if the condition
  \begin{equation}\label{eq:conddist2}
    \dfrac{\gamma_p^+\gamma_r^+}{k_p^-}\mu_*-\bar{u}\le\delta_u\le\dfrac{\gamma_p^-\gamma_r^-}{k_p^+}\mu_*
  \end{equation}
  is fulfilled.\mendth
\end{lemma}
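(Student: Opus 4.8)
The plan is to reduce the claim to a worst-case feasibility condition on the equilibrium control input, exactly as in the nominal disturbance-rejection lemma, and then to render that condition uniform over the admissible parameters by exploiting monotonicity.

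First I would recall from the nominal lemma that, in the presence of a constant input disturbance $\delta_u$, the equilibrium value of the control signal is
\begin{equation*}
u_\delta^*=\frac{\gamma_p\gamma_r}{k_p}\mu_*-\delta_u.
\end{equation*}
Since the saturated ON/OFF nonlinearity $\varphi$ is used, global rejection through the absolute-stability mechanism of Theorem \ref{th:global} (under the gain conditions (\ref{eq:djsqdj2})) requires precisely that this equilibrium lie in the linear regime of $\varphi$, i.e. $0\le u_\delta^*\le\bar u$: this is exactly what guarantees $\varphi(u_\delta^*)=u_\delta^*$ and that the equilibrium does not sit in the kernel of $\varphi$, so the integrator can drive the tracking error to zero. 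Hence, for a fixed parameter triplet, rejection holds if and only if $\frac{\gamma_p\gamma_r}{k_p}\mu_*-\bar u\le\delta_u\le\frac{\gamma_p\gamma_r}{k_p}\mu_*$, which is the nominal condition (\ref{eq:conddist1}).

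The next step is to make this uniform in $(k_p,\gamma_p,\gamma_r)$, mirroring the argument of Lemma \ref{lem:robloc}. I would introduce the map $g(k_p,\gamma_p,\gamma_r):=\frac{\gamma_p\gamma_r}{k_p}$ and observe that it is strictly increasing in $\gamma_p$ and $\gamma_r$ and strictly decreasing in $k_p$. Over the compact box $k_p\in[k_p^-,k_p^+]$, $\gamma_p\in[\gamma_p^-,\gamma_p^+]$, $\gamma_r\in[\gamma_r^-,\gamma_r^+]$, monotonicity gives $\min g=\frac{\gamma_p^-\gamma_r^-}{k_p^+}$ and $\max g=\frac{\gamma_p^+\gamma_r^+}{k_p^-}$, both attained at corner points. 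Requiring $u_\delta^*\ge0$ for every admissible parameter amounts (recall $\mu_*\ge0$) to $\delta_u\le(\min g)\mu_*$, while requiring $u_\delta^*\le\bar u$ for every admissible parameter amounts to $\delta_u\ge(\max g)\mu_*-\bar u$. Substituting the two extrema yields exactly (\ref{eq:conddist2}); because the extrema are attained, the condition is necessary as well as sufficient, which settles the equivalence.

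The main obstacle is essentially bookkeeping, but there is one genuine subtlety to flag. The extremal values appearing in (\ref{eq:conddist2}) are the \emph{upper} bounds $\gamma_p^+,\gamma_r^+$ and the \emph{lower} bound $k_p^-$, which only make sense if the admissible set is the compact box above rather than the one-sidedly unbounded set $\mathcal{P}_\mu$ of (\ref{eq:paramset}): over $\mathcal{P}_\mu$ one has $\sup g=+\infty$, so no finite lower bound on $\delta_u$ could robustly enforce $u_\delta^*\le\bar u$. I would therefore state the lemma over the bounded box, and the crux of the argument is simply to match the direction of each inequality ($u_\delta^*\ge0$ versus $u_\delta^*\le\bar u$) with the correct extreme value of the monotone map $g$.
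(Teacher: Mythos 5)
Your proof is correct and takes essentially the same route as the paper, whose proof is exactly the extremum argument you spell out: impose $0\le u_\delta^*\le\bar u$ with $u_\delta^*=\frac{\gamma_p\gamma_r}{k_p}\mu_*-\delta_u$ from the nominal lemma, and use the monotonicity of $\frac{\gamma_p\gamma_r}{k_p}$ over the parameter box as in Lemma \ref{lem:robloc}, with necessity following because the extrema are attained. Your flagged subtlety is also well taken: the bounds $\gamma_p^+$, $\gamma_r^+$, $k_p^-$ in (\ref{eq:conddist2}) are undefined for the set $\mathcal{P}_\mu$ of (\ref{eq:paramset}), over which the supremum of $\frac{\gamma_p\gamma_r}{k_p}$ is infinite, so the lemma indeed only makes sense over the compact box you substitute --- an inconsistency in the paper's statement that its one-line proof silently glosses over.
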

\begin{proof}
  The proof follows from a simple extremum argument similar to the one used in the proof of Lemma \ref{lem:robloc}.
\end{proof}
It seems important to point out that the sets of admissible perturbations defined by (\ref{eq:conddist1}) or (\ref{eq:conddist2}) do not depend on the choice for the controller gains. They do, however, depend on the mean reference value $\mu_*$, which is expected since small $\mu_*$'s yield small control inputs which are more likely to be overwhelmed by disturbances.

\subsection{Local robustness with respect to constant input delay}

Let us consider now that the control input is delayed by some constant delay $h>0$. In this case, it is convenient to rewrite the closed-loop system according to the state variable $(x_1,x_2,u)$:
\blue{\begin{equation}\label{eq:delay}
\begin{array}{rcl}
    \begin{bmatrix}
      \dot{x}_1(t)\\
      \dot{x}_2(t)\\
      \dot{u}(t)
    \end{bmatrix}&=&\begin{bmatrix}
      -\gamma_r & 0 & 0\\
      k_p & -\gamma_p & 0\\
       -k_1k_p & k_1\gamma_p-k_2 & 0
    \end{bmatrix}\begin{bmatrix}
     x_1(t)\\
     x_2(t)\\
     u(t)
    \end{bmatrix}\\&&\hfill+\begin{bmatrix}
      0 & 0 & 1\\
      0 & 0 & 0\\
      0 & 0 & 0
    \end{bmatrix}\begin{bmatrix}
     x_1(t-h)\\
     x_2(t-h)\\
     u(t-h)
    \end{bmatrix}+\begin{bmatrix}
      0\\
      0\\
       k_2
    \end{bmatrix}
      \mu_*.\hfill
    \end{array}
  \end{equation}
 We already know that the system can be made stable when $h=0$. Intuitively, the delay will deteriorate the stability of the system since it will be controlled with past information instead of current one. This leads to the following result:
   \begin{proposition}
   Assume that the gains $k_1$ and $k_2$ are given and satisfy the conditions of Lemma \ref{lem:jdksjdlks}. Then, the system \eqref{eq:delay} is asymptotically stable for all $h\in[0,h_c)$ and unstable otherwise where
   \begin{equation}\label{eq:hc}
      h_c:=-\dfrac{1}{\omega_c}\arg\left(\dfrac{-j\omega_c(j\omega_c+\gamma_r)(j\omega_c+\gamma_p)}{k_p(k_1j\omega_c+k_2)}\right)
   \end{equation}
   where $\omega_c$ is the only positive solution to the algebraic equation
      $$\Phi(\omega):=
      \omega^6+(\gamma_p^2+\gamma_r^2)\omega^4 +(\gamma_r^2\gamma_p^2-k_p^2k_1^2)\omega^2-k_p^2k_2^2=0.\mendth$$
   \end{proposition}
 \begin{proof}
This result can be proven using a standard root analysis of the characteristic equation. To this aim, define the characteristic equation of  the closed-loop system as $K(s,h):=P(s)+e^{-sh}Q(s)$ where  $P(s):=s(s+\gamma_r)(s+\gamma_p)$ and $Q(s):=k_p(k_1s+k_2)$.
  %
  There exists a pair of complex conjugate root on the imaginary axis for $K(s,h)$ iff $D(j\omega,h)=0$ for some $\omega>0$. This is equivalent to saying that $|P(j\omega)|^2-|Q(j\omega)|^2=0$. Calculations show that the equality $|P(j\omega)|^2-|Q(j\omega)|^2$ coincides with $\Phi(\omega)$. To study the existence of positive solutions to the algebraic equation $\Phi(\omega)=0$, we can alternatively consider the third-order algebraic equation $\Phi(\omega^{1/2})=0$. Invoking then Descartes' rule of sign, the number of sign changes in the coefficients is always 1, meaning that both $\Phi(\omega)=0$ and $\Phi(\omega^{1/2})=0$ have one and only one positive solution, which we denoted by $\omega_c$. Let us then define $(\omega_c,h_c)$ to be the pair for which $K(j\omega_c,h_c)=0$. Then, we get that  $e^{-j\omega_ch_c}=-P(j\omega_c)/Q(j\omega_c)$ and hence $h_c$ is given by \eqref{eq:hc}.
 \end{proof}}

\blue{\subsection{Example}

For simulation purposes, we consider the normalized version of system \eqref{eq:mainsyst} taken from \cite{Khammash:11} and given by:
\begin{equation}\label{eq:normalized}
  \begin{array}{lcl}
    \dot{\bar{x}}_1(t)&=&-(\gamma_r^0+u_2)\bar{x}_1(t)+\tilde{u}_1(t)\\
    \dot{\bar{x}}_2(t)&=&\gamma_p(\bar{x}_1(t)-\bar{x}_2(t))\\
    \dot{\bar{x}}_3(t)&=&(\gamma_r^0+u_2(t))\bar{x}_1(t)-2(\gamma_r^0+u_2)\bar{x}_3(t)+\tilde{u}_1(t)\\
    \dot{\bar{x}}_4(t)&=&(\gamma_r^0+\gamma_p)\bar{x}_3(t)-(\gamma_r^0+u_2+\gamma_p)\bar{x}_4(t)\\
    \dot{\bar{x}}_5(t)&=&\dfrac{\gamma_p}{\alpha}\left[\bar{x}_1(t)+\bar{x}_2(t)+2(\alpha-1)\bar{x}_4(t)\right]-2\gamma_p\bar{x}_5(t)
  \end{array}
\end{equation}
where $\tilde{u}_1(t)=\gamma_r^0+bu_1(t)$, $\gamma_r^0=0.03$, $\gamma_p=0.0066$, ${b=0.9587}$, $k_p=0.06$ and $\alpha=1+k_p/(\gamma_r^0+\gamma_p)$. The system has been normalized according to basal levels for transcription rate $k_r^0$ and degradation rate $\gamma_r^0$. In the absence of control inputs, i.e. $\tilde{u}_1\equiv0$ and $u_2\equiv0$, the system converges to the normalized equilibrium values $\bar{x}_i^*=1$, $i=1,\ldots,5$. The parameter values are borrowed from \cite{Khammash:11}. For this system, the global stability conditions write
\begin{equation}
  \dfrac{\gamma_r^0(\mu-1)}{b}\le\bar{u}, k_1>\dfrac{k_2}{\gamma_p+\gamma_r^0},k_2>0.
\end{equation}
We choose $\bar{u}=\infty$, $k_1=0.1191$ and $k_2=0.0007$, which satisfy the above conditions.

Simulations yield the trajectories of Fig. \ref{fig:mean1} and Fig. \ref{fig:mean2}. We can see, as expected, that the proposed controller achieves output tracking for different references and in presence of a saturation on the input, although convergence takes longer in the latter case.}
\begin{figure}[h]
\centering
\includegraphics[width=0.8\textwidth]{./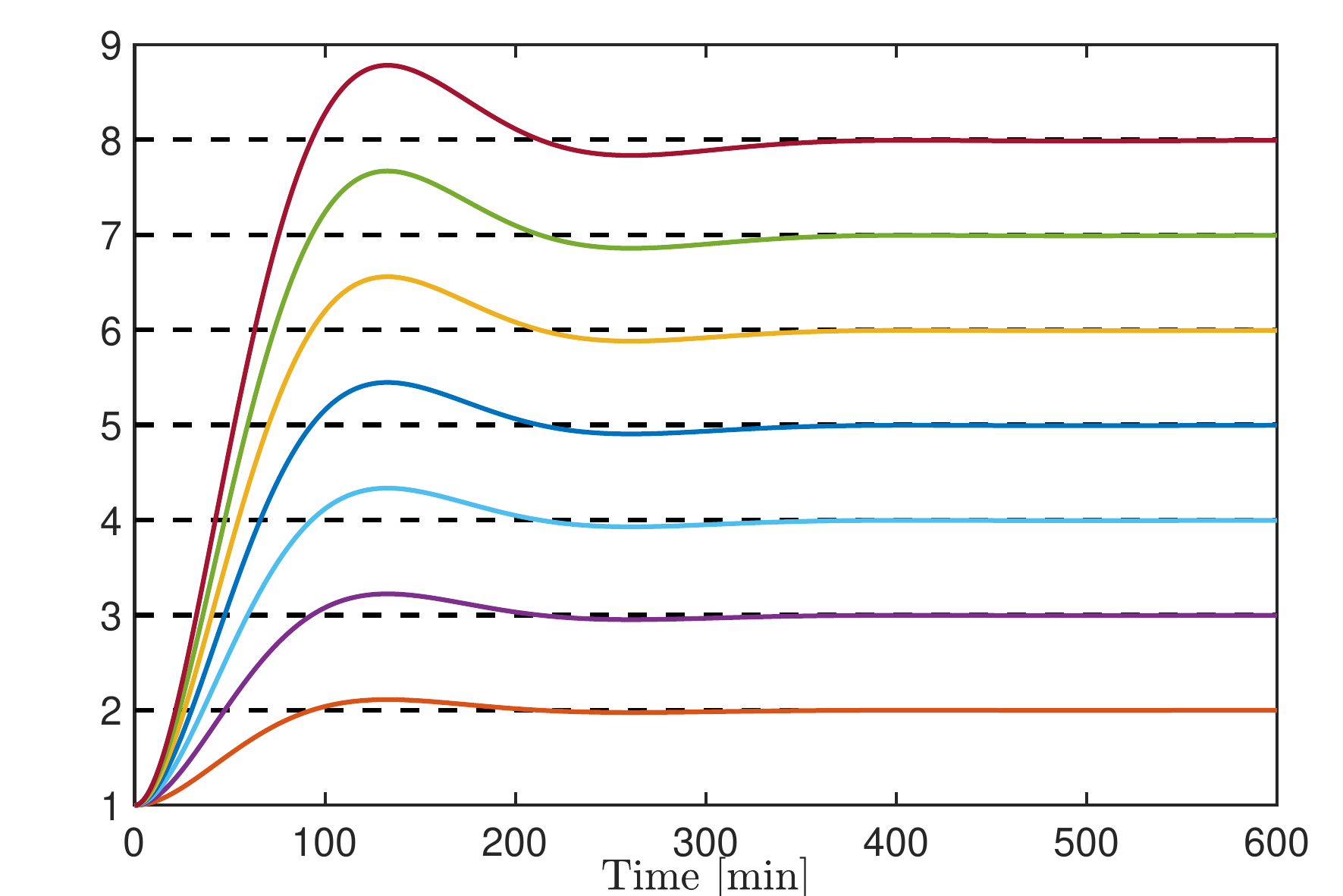}
\caption{Trajectories of the mean number of proteins for different reference values.}\label{fig:mean1}
\end{figure}
\begin{figure}[h]
\centering
\includegraphics[width=0.8\textwidth]{./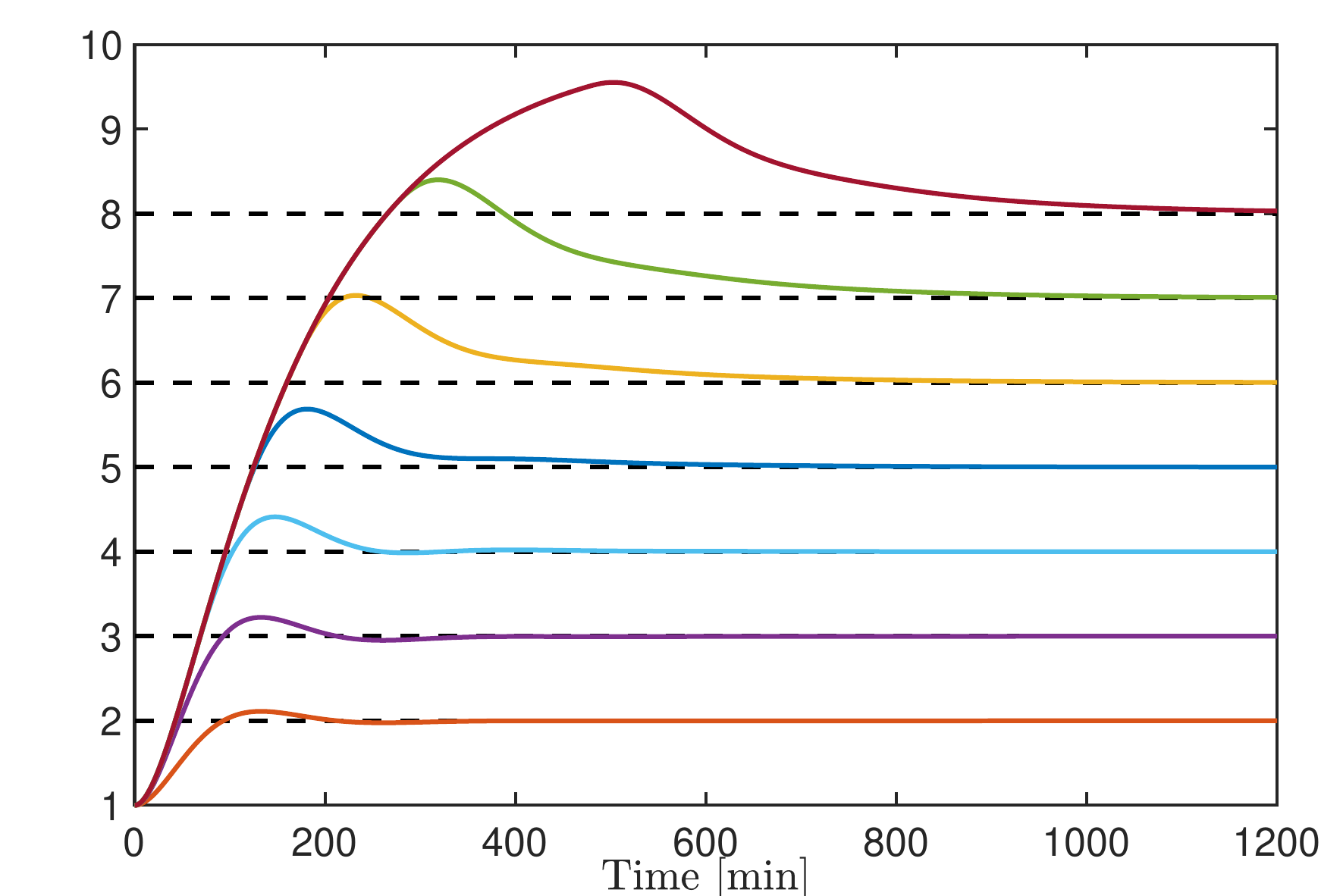}
\caption{Trajectories of the mean number of proteins for different reference values and with a saturation of $\bar{u}=0.3$.}\label{fig:mean2}
\end{figure}
%

\subsection{Concluding remarks}

\blue{The protein variance at equilibrium is given by
\begin{equation}
  \sigma_*^2=\mu_*\left(1+\dfrac{k_p}{\gamma_p+\gamma_r}\right)
\end{equation}
which shows that the equilibrium variance depends linearly on the mean. Therefore, it cannot be independently assigned to a desired value. We can also observe that a higher mean yields a higher variance, which motivates the aim of controlling the variance in order to keep it at a desired level.}

\section{Mean and Variance control of protein levels in a gene expression network}\label{sec:var}

As discussed in the previous section, acting on $k_r$ is not sufficient for controlling both the mean and variance equilibrium values. It is shown in this section that variance control can be achieved by adding the second control input $\gamma_r\equiv u_2$. Fundamental limitations of the control system are discussed first, then local stabilizability is addressed.

\subsection{Fundamental limitations}

Let us consider in this section the control inputs $k_r\equiv u_1$ and $\gamma_r\equiv u_2$. It is shown below that there is a fundamental limitation on the references values for the mean and variance.
\begin{proposition}
  The set of admissible reference values $(\mu_*,\sigma_*^2)$ is given by the open and nonempty set
  \begin{equation}
    \mathcal{A}:=\left\{(x,y)\in\mathbb{R}_{>0}^2:\ x<y< \left(1+\dfrac{k_p}{\gamma_p}\right)x\right\}
  \end{equation}
  where $k_p,\gamma_p>0$. \mendth
\end{proposition}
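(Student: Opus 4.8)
The plan is to characterize $\mathcal{A}$ as the set of pairs $(\mu_*,\sigma_*^2)$ that arise as \emph{stationary} values of the protein mean $x_2$ and protein variance $x_5$ for \emph{some} admissible (i.e.\ strictly positive) pair of control inputs $(u_1,u_2)=(k_r,\gamma_r)$. So I would first set the right-hand side of the full mean/covariance system \eqref{eq:mainsyst} to zero and solve for the equilibrium as a function of $(u_1,u_2)$, treating $k_p,\gamma_p>0$ as fixed, and then determine which pairs $(\mu_*,\sigma_*^2)$ are reachable as $(u_1,u_2)$ ranges over $\mathbb{R}_{>0}^2$.

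Carrying out the equilibrium computation, the mean equations give $x_1^*=u_1/u_2$ and $\mu_*=x_2^*=k_px_1^*/\gamma_p=k_pu_1/(\gamma_p u_2)$. The covariance equations then unwind in cascade: from $\dot x_3=0$ together with $k_r=\gamma_r x_1^*$ one gets $x_3^*=x_1^*$; from $\dot x_4=0$ one gets $x_4^*=k_px_3^*/(\gamma_p+u_2)$; and substituting into $\dot x_5=0$, using the mean relation $k_px_1^*=\gamma_px_2^*$, I expect to arrive at the central identity
\begin{equation*}
  \sigma_*^2=x_5^*=\mu_*\left(1+\dfrac{k_p}{\gamma_p+u_2}\right).
\end{equation*}
The content of the proposition is entirely encoded in this one relation, together with the fact that $u_1$ enters only through $\mu_*$.

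From here the argument is a range analysis. Since $u_1=\mu_*\gamma_p u_2/k_p$ is positive for any prescribed $\mu_*>0$ and any $u_2>0$, the mean can be set to any positive value independently; the only genuine restriction concerns the ratio $\sigma_*^2/\mu_*=1+k_p/(\gamma_p+u_2)$. The map $u_2\mapsto 1+k_p/(\gamma_p+u_2)$ is continuous and strictly decreasing on $(0,\infty)$, with limits $1+k_p/\gamma_p$ as $u_2\to0^+$ and $1$ as $u_2\to\infty$, neither attained; hence its image is exactly the open interval $(1,1+k_p/\gamma_p)$. This gives both inclusions: if $(\mu_*,\sigma_*^2)$ is admissible then $\sigma_*^2/\mu_*$ lies in this interval, i.e.\ $\mu_*<\sigma_*^2<(1+k_p/\gamma_p)\mu_*$; conversely, for any such pair one solves $u_2=k_p/(\sigma_*^2/\mu_*-1)-\gamma_p$, which is positive precisely because $\sigma_*^2/\mu_*-1<k_p/\gamma_p$, and then takes $u_1=\mu_*\gamma_p u_2/k_p>0$. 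Openness is immediate from the strict inequalities, and nonemptiness follows from exhibiting one point, e.g.\ $(1,1+k_p/(2\gamma_p))$.

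The computation is otherwise routine; the one point deserving care is the sufficiency direction, where I must check that the control inputs recovered from a given $(\mu_*,\sigma_*^2)\in\mathcal{A}$ are \emph{strictly positive}, since the upper bound $\sigma_*^2<(1+k_p/\gamma_p)\mu_*$ is exactly what forces $u_2>0$ and is therefore not a cosmetic constraint but the feasibility boundary of the actuator $\gamma_r=u_2$. I would also note in passing that the lower bound $\sigma_*^2>\mu_*$ reflects the Poisson-like floor $x_3^*=x_1^*$ on the mRNA statistics that propagates to the protein; this is the structural obstruction alluded to in the section heading and cannot be removed by any choice of $(u_1,u_2)$.
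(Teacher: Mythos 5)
Your proof is correct and takes essentially the same route as the paper: both rest on the stationary identity $\sigma_*^2=\mu_*\left(1+\dfrac{k_p}{\gamma_p+u_2}\right)$, with the lower bound $\sigma_*^2>\mu_*$ coming from this coefficient-of-variation relation and the upper bound coming from strict positivity of the inverted input $u_2^*=-\gamma_p+\dfrac{k_p\mu_*}{\sigma_*^2-\mu_*}$ (which then forces $u_1^*=\dfrac{\gamma_p}{k_p}\mu_*u_2^*>0$). Your write-up merely makes explicit the cascade equilibrium computation and the monotone range analysis of $u_2\mapsto 1+k_p/(\gamma_p+u_2)$ that the paper leaves implicit.
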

\begin{proof}
  The lower bound is imposed by the coefficient of variation which gives
  \begin{equation}
  \sigma_*^2=\left(1+\dfrac{k_p}{\gamma_r+\gamma_p}\right)\mu_*>\mu_*.
\end{equation}
The upper bound is imposed by the positivity of the unique equilibrium control inputs values given by
\begin{equation}
    u_1^*=\dfrac{\gamma_p}{k_p}\mu_*u_2^*\textnormal{ and }u_2^*=-\gamma_p+\dfrac{k_p\mu_*}{\sigma^2_*-\mu_*}
\end{equation}
which are well-posed since $\sigma^2_*-\mu_*>0$ according to the coefficient of variation constraint. The second equilibrium control input value $u_2^*$ is positive if and only if $\textstyle{\sigma_*^2< \left(1+\frac{k_p}{\gamma_p}\right)\mu_*}$, which in turn implies that $u_1^*$ is nonnegative as well. The proof is complete.
\end{proof}
The lower bound obtained above remains valid when $k_p$ or $\gamma_p$ are chosen as second control inputs. The factor of the upper-bound however changes to $1+k_p/\gamma_r$ when $\gamma_p\equiv u_2$, or becomes unconstrained when $k_p\equiv u_2$. Note however that the upper bound on the variance is not a strong limitation in itself because we are mostly interested in achieving low variance.

Note also that since the lower bound on the achievable variance is independent of the controller structure, it is hence pointless to look for advanced control techniques in view of improving this limit. A positive fact, however, is that the lower bound is fixed and does not depend on the knowledge of the parameters of the system. This potentially makes low equilibrium variance robustly achievable.

\subsection{Problem formulation}

Considering the control inputs $k_r\equiv u_1$ and $\gamma_r\equiv u_2$, the system (\ref{eq:mainsyst}) can be rewritten as the bilinear system
\begin{equation}\label{eq:bilsyst}
  \begin{array}{lcl}
    \dot{x}_1&=&-u_2x_1+u_1\\
    \dot{x}_2&=&k_px_1-\gamma_px_2\\
    \dot{x}_3&=&u_2x_1-2u_2x_3+u_1\\
    \dot{x}_4&=&k_px_3-\gamma_px_4-u_2x_4\\
    \dot{x}_5&=&k_px_1+\gamma_px_2+2k_px_4-2\gamma_px_5\\
    \dot{I}_1&=&\mu_*-x_2\\
    \dot{I}_2&=&\sigma^2_*-x_5\\
  \end{array}
\end{equation}
where $I_1$ and $I_2$ are the states of the integrators. The control inputs are defined as the outputs of a multivariable positive PI controller
\blue{\begin{equation}\label{eq:clb}
  \begin{array}{lcl}
    u_1&=&\varphi_1\left(k_1e_1+k_2I_1+k_3e_2+k_4I_2\right)\\
    u_2&=&\varphi_2\left(k_5e_1+k_6I_1+k_7e_2+k_8I_2\right)
  \end{array}
\end{equation}}
where $e_1:=\mu_*-x_2$, $e_2:=\sigma_*^2-x_5$, \blue{and $\varphi_i(\cdot):=\min\{\max\{0,\cdot\},\bar{u}_i\}$, $\bar{u}_i>0$, $i=1,2$.}

\begin{property}
Assume that $k_2k_8-k_4k_6\ne0$, then the equilibrium point of the system (\ref{eq:bilsyst})-(\ref{eq:clb}) is unique and given by
\begin{equation}\label{eq:eqptv1}
  \begin{array}{l}
    x_1^*=\dfrac{\gamma_p}{k_p}\mu_*,\quad x_2^*=\mu_*,\quad  x_3^*=x_1^*,\quad x_4^*=\dfrac{\gamma_p}{\gamma_p+u_2^*}\mu_*,\vspace{1mm}\\
    x_5^*=\sigma_*^2,\quad u_1^*=\dfrac{\gamma_p}{k_p}\mu_*u_2^*,\quad  u_2^*=-\gamma_p+\dfrac{k_p\mu_*}{\sigma^2_*-\mu_*}
  \end{array}
\end{equation}
and
\begin{equation}\label{eq:eqptv2}
  \left[\begin{array}{c}
    I_1^*\\
    I_2^*
  \end{array}\right]=\left[\begin{array}{cc}
    k_2 & k_4\\
    k_6 & k_8
  \end{array}\right]^{-1}\left[\begin{array}{c}
    u_1^*\\
    u_2^*
  \end{array}\right]
\end{equation}
\blue{provided that $u^*_i\le\bar{u}_i$, $i=1,2$.}
\end{property}
Associated with the set of admissible references $\mathcal{A}$, we define the set of equilibrium points as
\begin{equation*}
  \mathcal{X}^*:=\left\{(x^*,I^*)\in\mathbb{R}^7:\ (y_*,\sigma_*^2)\in\mathcal{A}, \blue{u_i^*\in[0,\bar{u}_i],i=1,2}\right\}.
\end{equation*}

\subsection{Local stabilizability and stabilization}

Since the equilibrium control inputs are positive, the nonlinearities are not active in a neighborhood of the equilibrium point (\ref{eq:eqptv1})-(\ref{eq:eqptv2}). Local analysis can hence be performed using standard linearization techniques. The corresponding  Jacobian system is given by
\begin{equation}
  \dot{x}_\ell=A^*_\ell x_\ell
\end{equation}
where $A^*_\ell$ is given by
  \begin{equation}\label{eq:As}
\begin{bmatrix}
    \gamma_p+\frac{k_p\mu_*}{\delta} & -k_1+\frac{\gamma_pk_5\mu_*}{k_p} & 0 & 0 & -k_3+\frac{\gamma_pk_7\mu_*}{k_p} & k_2-\frac{\gamma_pk_6\mu_*}{k_p} & k_4-\frac{\gamma_pk_8\mu_*}{k_p}\\
    k_p & -\gamma_p & 0 & 0 & 0 & 0 & 0\\
    -\gamma_p-\frac{k_p\mu_*}{\delta} &  -k_1+\frac{\gamma_pk_5\mu_*}{k_p} & 2\gamma_p+2\frac{k_p\mu_*}{\delta} & 0 & -k_3+\frac{\gamma_pk_7\mu_*}{k_p} & k_2-\frac{\gamma_pk_6\mu_*}{k_p} & k_4-\frac{\gamma_pk_8\mu_*}{k_p}\\
    0 & -k_5\frac{\gamma_p\delta}{k_p} & k_p & \frac{k_p\mu_*}{\delta} & -\frac{\gamma_pk_7\delta}{k_p} & \frac{\gamma_pk_6\delta}{k_p} & \frac{\gamma_pk_8\delta}{k_p}\\
    k_p & \gamma_p & 0 & 2k_p & -2\gamma_p & 0 & 0\\
    0 & -1 & 0 & 0 & 0& 0 & 0\\
    0 & 0 & 0 & 0 & -1& 0 & 0
  \end{bmatrix}
\end{equation}
with $\delta:=\mu_*-\sigma_*^2$. The following result states conditions for the Jacobian system to be locally representative of the behavior of the original nonlinear system:
\begin{lemma}
 The Jacobian system fully characterizes the local behavior of the controlled nonlinear system (\ref{eq:bilsyst})-(\ref{eq:clb}) if and only if the condition $k_2k_8-k_4k_6\ne0$ holds.\mendth
\end{lemma}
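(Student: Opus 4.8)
The plan is to reduce the claim to a statement about the spectrum of the Jacobian $A^*_\ell$ at the equilibrium, and specifically to the presence or absence of a zero eigenvalue. First I would record that, because the equilibrium control inputs $u_1^*,u_2^*$ are strictly positive, the ON/OFF (or saturated) map $\varphi$ is inactive on a neighborhood of the equilibrium; there the closed-loop vector field is smooth and affine in $(x,I)$, so the usual linearization applies and, by the Hartman--Grobman theorem, the Jacobian faithfully reproduces the local phase portrait exactly when the equilibrium is hyperbolic. The only structural mechanism by which hyperbolicity can fail here is a zero eigenvalue induced by a redundancy of the two integrators, so the heart of the proof is to show that $A^*_\ell$ is nonsingular if and only if $k_2k_8-k_4k_6\neq0$.

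To isolate this dependence, I would write the Jacobian in the block form dictated by the controller structure. Ordering the state as $(x_1,\dots,x_5,I_1,I_2)$ and letting $B_u=\partial f/\partial u$ evaluated at the equilibrium, $C$ the output matrix selecting $(x_2,x_5)$, $K_1,K_2$ the proportional and integral gain matrices, and $A_{cl}=\partial f/\partial x-B_uK_1C$, the integrator equations $\dot I=-Cx+\text{const}$ together with the fact that $I$ enters the dynamics only through $u$ give
\begin{equation*}
A^*_\ell=\begin{bmatrix} A_{cl} & B_uK_2\\ -C & 0\end{bmatrix}.
\end{equation*}
The reverse implication is then immediate: if $k_2k_8-k_4k_6=0$ then $K_2$ is singular, so there is $v\neq0$ with $K_2v=0$, and the vector $(0,0,0,0,0,v_1,v_2)$ lies in the kernel of $A^*_\ell$. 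Thus $0$ is an eigenvalue; the linearization carries a center direction and, consistently with the failure of the uniqueness statement of the preceding Property, any equilibrium is non-isolated, so the Jacobian cannot characterize the local behavior.

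For the forward implication I would use the Schur-complement factorization $\det A^*_\ell=\det(A_{cl})\,\det\!\big(C A_{cl}^{-1}B_u\big)\,\det(K_2)$, valid whenever $A_{cl}$ is invertible, whence $\det A^*_\ell=c\,(k_2k_8-k_4k_6)$ with $c=\det(A_{cl})\det(CA_{cl}^{-1}B_u)$. The quantity $CA_{cl}^{-1}B_u$ is, up to sign, the static gain of the gene-expression plant with the proportional loop closed, so $c\neq0$ amounts to saying that this plant has no transmission zero at the origin. The main obstacle is precisely the verification of $c\neq0$: I would carry it out directly from the explicit entries of $A^*_\ell$ in (\ref{eq:As}), exploiting the cascade (block-triangular) structure of the mean and covariance equations together with the positivity of $k_p,\gamma_p$ and of the equilibrium values in (\ref{eq:eqptv1}) to show the DC-gain determinant never vanishes on the admissible set $\mathcal A$. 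Granting this, $k_2k_8-k_4k_6\neq0$ forces $\det A^*_\ell\neq0$, and combined with the uniqueness of the equilibrium from the Property and the inactivity of $\varphi$, the linearization is a genuine local model, completing the equivalence.
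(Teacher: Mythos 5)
Your overall reduction is sound and, in spirit, it is the paper's own: both arguments equate ``fully characterizes the local behavior'' with $\det A^*_\ell\neq 0$ and then show this determinant is a nonzero multiple of $k_2k_8-k_4k_6$, and your kernel-vector argument for the backward direction (a null vector supported on the integrator coordinates when $K_2$ is singular) is complete and clean. The genuine gap is in the forward direction: you explicitly defer the verification that $c=\det(A_{cl})\det\bigl(CA_{cl}^{-1}B_u\bigr)\neq 0$ (``Granting this\ldots''), and that verification is precisely the nontrivial content of the lemma. The paper carries out exactly this step by direct computation,
\begin{equation*}
\det(A^*_\ell)=4\gamma_pk_p\,(k_2k_8-k_4k_6)\,\bigl(\mu_*(k_p+\gamma_p)-\gamma_p\sigma_*^2\bigr),
\end{equation*}
and then observes that $(\mu_*,\sigma_*^2)\in\mathcal{A}$, i.e. $\sigma_*^2<(1+k_p/\gamma_p)\mu_*$, makes the last factor strictly positive. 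So your plan would succeed, but as submitted the proof is incomplete at its crux; note also that admissibility of the reference is not a convenience here but essential, since at the boundary $\gamma_p\sigma_*^2=\mu_*(k_p+\gamma_p)$ your multiplier $c$ vanishes even though $K_2$ is nonsingular.

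There is a secondary hole in the same direction: the factorization $\det A^*_\ell=\det(A_{cl})\det\bigl(CA_{cl}^{-1}B_u\bigr)\det(K_2)$ is valid only when $A_{cl}=f_x-B_uK_1C$ is invertible, a $K_1$-dependent condition you never check, and singularity of $A_{cl}$ does not by itself force $\det A^*_\ell=0$, so those gain choices are simply not covered by your argument. The clean repair, consistent with the paper's $K_1$-free determinant formula, is to note that $\det A^*_\ell$ does not depend on $K_1$ at all: for invertible $K_2$, right-multiplying by the unimodular block matrix with lower-left block $K_2^{-1}K_1C$ replaces $A_{cl}$ by the open-loop Jacobian $f_x$ without changing the determinant, and the identity extends to singular $K_2$ by polynomial continuity. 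You may then evaluate $c$ at $K_1=0$, where $f_x$ is lower triangular in the ordering $(x_1,\ldots,x_5)$ with diagonal $\bigl(-u_2^*,\,-\gamma_p,\,-2u_2^*,\,-(\gamma_p+u_2^*),\,-2\gamma_p\bigr)$, invertible since $u_2^*>0$ on $\mathcal{A}$, after which $\det\bigl(Cf_x^{-1}B_u\bigr)$ is a short explicit computation whose nonvanishing is exactly the positivity of $\mu_*(k_p+\gamma_p)-\gamma_p\sigma_*^2$. Finally, a shared imprecision you inherit from the paper: strictly, the Jacobian characterizes the local phase portrait under hyperbolicity, and your claim that a zero eigenvalue is the only possible failure mode is unjustified (purely imaginary pairs can occur for other gain values); since the paper adopts the same convention, this is a remark rather than a deduction against you.
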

\begin{proof}
  For the Jacobian system to represent the local behavior, it is necessary and sufficient that $A^*_\ell$ has no eigenvalue at 0. A quick check at the determinant value
  \begin{equation*}
    \det(A^*_\ell)=4\gamma_pk_p(k_2k_8-k_4k_6)(\mu_*(k_p+\gamma_p)-\gamma_p\sigma_*^2)
  \end{equation*}
  yields that the condition $k_2k_8-k_4k_6\ne0$ is necessary and sufficient for the local representativity of the nonlinear system. Note that since $(\mu_*,\sigma_*^2)\in\mathcal{A}$, the term ${\mu_*(k_p+\gamma_p)-\gamma_p\sigma_*^2}$ is always different from 0. The proof is complete.
  \end{proof}

The local system being linear, the Routh-Hurwitz criterion could have indeed been applied as in the mean control case, but would have led to quite complex algebraic inequalities, difficult to analyze in the general case, even for simple controller structures.  Despite the `large size' of the matrix $A^*_\ell$, it is fortunately still possible to provide a stabilizability result using the fact that $A^*_\ell$ is marginally stable when the controller parameters $k_i$ are set to 0. This is obtained using perturbation theory of nonsymmetric matrices \cite{Seyranian:03}.
\begin{lemma}
  Given any $k_p,\gamma_p>0$, the bilinear system (\ref{eq:bilsyst}) is locally asymptotically stabilizable around any equilibrium point (\ref{eq:eqptv1})-(\ref{eq:eqptv2}) using the control law (\ref{eq:clb}) \blue{provided that $(\mu_*,\sigma_*^2)\in\mathcal{A}$ and $u^*_i\le\bar{u}_i$, $i=1,2$.}  \mendth
\end{lemma}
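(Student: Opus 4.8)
The plan is to exploit the triangular structure of the Jacobian $A^*_\ell$ in \eqref{eq:As} at the uncontrolled gain value $k_1=\cdots=k_8=0$, and then to perturb the controller gains using first-order eigenvalue perturbation theory for nonsymmetric matrices \cite{Seyranian:03}. First I would examine $A_0:=A^*_\ell|_{k_i=0}$. Columns $6$ and $7$ (those of the integrator states $I_1,I_2$) then vanish, so $A_0$ is block lower-triangular of the form $\begin{bmatrix} A_p & 0\\ C & 0\end{bmatrix}$, where $A_p\in\mathbb{R}^{5\times5}$ is the plant Jacobian and $C$ carries the integrator feedback from $x_2$ and $x_5$. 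Using the identity $\frac{k_p\mu_*}{\delta}=-(u_2^*+\gamma_p)$ that follows from \eqref{eq:eqptv1}, the block $A_p$ is itself lower-triangular with diagonal $(-u_2^*,-\gamma_p,-2u_2^*,-(\gamma_p+u_2^*),-2\gamma_p)$, hence Hurwitz for every admissible equilibrium (where $u_2^*>0$). Consequently $A_0$ has five eigenvalues in the open left-half plane together with a double eigenvalue at the origin arising from the two integrators.

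The next step is to show that this zero eigenvalue is semisimple, so that first-order perturbation theory applies cleanly. Since $A_p$ is invertible, the right null space of $A_0$ is exactly $\{(0,w):w\in\mathbb{R}^2\}$, which is two-dimensional, so the origin is semisimple with right eigenvectors $V=\begin{bmatrix}0_{5\times2}\\ I_2\end{bmatrix}$ and left eigenvectors $W^{*}=\begin{bmatrix}-CA_p^{-1} & I_2\end{bmatrix}$, normalized by $W^{*}V=I_2$. Writing $A^*_\ell=A_0+\Delta$, where $\Delta$ collects the gain-dependent entries, and scaling the gains as $\epsilon\hat{k}$, the two eigenvalues issued from the origin behave to first order like $\epsilon\,\lambda_i(M)+O(\epsilon^2)$, where $M:=W^{*}\Delta V$ is a $2\times2$ matrix. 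Because $\Delta V$ involves only columns $6$ and $7$, whose nonzero entries sit in rows $1,3,4$, the identity block of $W^{*}$ contributes nothing and $M$ depends solely on the integral gains $k_2,k_4,k_6,k_8$.

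The decisive step is then to check that $M$ can be rendered Hurwitz. A short computation gives $M=R\,G$, where $G=\begin{bmatrix} a&b\\ c&d\end{bmatrix}$ is an invertible affine reparametrization of $(k_2,k_4,k_6,k_8)$ (invertible because $k_p,\gamma_p>0$ and $\delta\neq0$), and $R$ is the $2\times2$ matrix built from the second and fifth rows of $A_p^{-1}$ that are selected by $C$ and by the sparsity of $\Delta V$. Since $A_p$ is lower-triangular, $R$ is again lower-triangular, with diagonal entries $-k_p/(u_2^*\gamma_p)$ and $-k_p/(\gamma_p(\gamma_p+u_2^*))$, hence nonsingular. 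Therefore $G$, and with it $M$, is freely assignable; choosing for instance $M=-I_2$ places both first-order eigenvalue derivatives in the open left-half plane. By continuity of the spectrum, for all sufficiently small $\epsilon>0$ the five plant eigenvalues remain stable while the two eigenvalues leaving the origin enter the open left-half plane, so $A^*_\ell$ becomes Hurwitz and the equilibrium is locally exponentially stabilizable.

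I expect the main obstacle to be this last structural point, namely verifying that the reduced $2\times2$ matrix $R$ is nonsingular: this is precisely what guarantees that the integral gains exert full first-order control over the splitting of the double eigenvalue at the origin. Everything else reduces to bookkeeping on the triangular structure of $A_0$ and $A_p$ and to the standard semisimple eigenvalue perturbation formula; the positivity of $u_2^*$ (ensured by admissibility of the reference) is what keeps the plant block Hurwitz throughout.
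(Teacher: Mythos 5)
Your proposal is correct, and it follows the same skeleton as the paper's proof: both view the closed-loop Jacobian \eqref{eq:As} as a perturbation of its value $A_0$ at zero gains, note that $A_0$ has a semisimple double eigenvalue at the origin coming from the two integrators together with a Hurwitz plant block (your identity $k_p\mu_*/\delta=-(u_2^*+\gamma_p)$ and the triangularity of $A_p$ make explicit what the paper merely asserts as marginal stability), and reduce stabilizability, via first-order perturbation theory for nonsymmetric matrices \cite{Seyranian:03}, to rendering a $2\times2$ reduced matrix $M=W^{*}\Delta V$ Hurwitz; in both arguments the proportional gains drop out at first order. Where you genuinely diverge is the decisive step. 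The paper restricts to one integrator per channel ($d_4=d_6=0$), computes $M(d)$ explicitly as a function of $(\mu_*,\sigma_*^2,d_2,d_8)$, and applies Routh--Hurwitz, arriving at the sign conditions $d_2>0$, $d_8<0$ and the inequality \eqref{eq:lastcond}. You instead keep all four integral gains and factor $M=RG$ with $R$ lower-triangular and nonsingular (diagonal entries $-k_p/(u_2^*\gamma_p)$ and $-k_p/(\gamma_p(\gamma_p+u_2^*))$, nonzero since $u_2^*>0$ for admissible references) and $G$ linearly and invertibly parametrized by $(k_2,k_4,k_6,k_8)$ (the relevant $2\times2$ blocks have determinant $\gamma_p\delta/k_p\neq0$), so that $M$ is \emph{fully assignable}. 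I checked your computation against the paper's: with $k_4=k_6=0$ your $M_{11}=-k_2k_p/(u_2^*\gamma_p)$ agrees with the paper's $\psi k_p d_2/\gamma_p$, since $1/u_2^*=-\psi$. Your route buys a stronger conclusion (arbitrary first-order placement of the two critical eigenvalues) with no reference-dependent inequality; the paper's explicit condition \eqref{eq:lastcond} buys something your argument does not directly provide, namely sign conditions on $(d_2,d_8)$ that can be made uniform over $(\mu_*,\sigma_*^2)\in\mathcal{A}$, which is exactly what the subsequent lemma exploits to obtain a single controller stabilizing all equilibria in $\mathcal{X}^*$ (your gains achieving $M=-I_2$ depend on the equilibrium through $R$). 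One cosmetic remark: with $M=-I_2$ the first-order eigenvalue $-1$ is repeated; the argument still goes through because the eigenvalues emerging from the origin are those of a reduced matrix $\varepsilon\,W^{*}\hat{\Delta}V+O(\varepsilon^2)$, whose spectrum converges to that of $W^{*}\hat{\Delta}V$, but if you prefer to stay strictly within the expansion \eqref{eq:eigexp} with distinct $\xi_i$, simply assign $M=\diag(-1,-2)$ instead.
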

\begin{proof}
  The perturbation argument \cite[Theorem 2.7]{Seyranian:03} relies on checking whether the eigenvalues on the imaginary axis can be shifted by slightly perturbing the controller coefficients around the `0-controller', i.e. by letting $k_i=\eps d_i$, where $\eps\ge0$ is the small perturbation parameter and $d_i$ is the perturbation direction corresponding to the controller parameter $k_i$. We assume here that both integrators are involved in the controller, that is ${|d_2|+|d_6|>0}$ and ${|d_4|+|d_8|>0}$. To prove the result, let us first rewrite the matrix $A^*_\ell$ as $ A^*_\ell=A_0+\eps\sum_{j=1}^8d_jA_j$.
%
%
The matrix $A_0$ is a marginally stable matrix with a semisimple eigenvalue of multiplicity two at zero. Paradoxically, these eigenvalues introduced by the PI controller are the only critical ones that must be stabilized, i.e. shifted to the open left-half plane. From perturbation theory of general matrices \cite{Seyranian:03}, it is known that semisimple eigenvalues bifurcate into (distinct or not) eigenvalues according to the expression \cite{Seyranian:03} $\lambda_i(\eps,d)=\eps\xi_i(d)+o(\eps)$, $i=1,2$
where $\xi_i(d)$ is the $i^{th}$ eigenvalue of the matrix ${M(d):=\sum_{j=1}^8d_j\nu_\ell A_j\nu_r}$ where
  $\nu_\ell\in\mathbb{R}^{2\times  5}$ and $\nu_r\in\mathbb{R}^{5\times  2}$ denote the normalized left- and right-eigenvectors\footnote{Normalized eigenvectors verify the conditions $\nu_\ell\nu_r=I$.} associated with the semisimple zero eigenvalue. It turns out that all $\nu_\ell A_j\nu_r$'s with odd index are zero, indicating that the proportional gains have a locally negligible stabilizing effect. This hence reduces the size of the problem to 4 parameters, i.e. those related to integral terms. We make now the additional restriction that $d_4=d_6=0$ reducing the controller structure to one integrator per control channel. The matrix $M(d)$ then becomes
\begin{equation*}
  M(d)=\psi\begin{bmatrix}
        \frac{k_pd_2}{\gamma_p} & -d_8\mu_*\\
        \frac{k_p\sigma_*^2d_2}{\gamma_p\mu_*} & \frac{\gamma_p(\mu_*-\sigma_*^2)^2}{k_p\mu_*}+\mu_*-2\sigma_*^2
  \end{bmatrix}
\end{equation*}
where $\psi:=\frac{\mu_*-\sigma_*^2}{\gamma_p(\mu_*-\sigma_*^2)+k_p\mu_*}$. The zero semisimple eigenvalues then move to the open left-half plane if there exist perturbation directions $d_2,d_8\in\mathbb{R}$, $d_2d_8\ne0$, such that $M(d)$ is Hurwitz stable. We can now invoke the Routh-Hurwitz criterion on $M(d)$ and we get the conditions
\begin{equation*}
  \begin{array}{rcl}
    d_2d_8\psi\frac{(\mu_*-\sigma_*^2)^2}{\gamma_p\mu_*}&>&0\\
    \gamma_p\psi\left(d_8\left(\gamma_p(\mu_*-\sigma_*^2)^2-2k_p\mu_*\sigma_*^2\right)+d_2k_p\mu_*\sigma_*^2\right)&<&0.
  \end{array}
\end{equation*}
Since the term $\psi$ is negative for all $(\mu_*,\sigma_*^2)\in\mathcal{A}$, the first inequality holds true if and only if $d_2d_8<0$, i.e. perturbation directions have different signs. The second inequality can be rewritten as
\begin{equation}\label{eq:lastcond}
\textstyle  d_2>d_8\left(2-\frac{\gamma_p(\mu_*-\sigma_*^2)^2}{k_p\mu_*\sigma_*^2}\right).
\end{equation}
Choosing then $d_8<0$, there always exists $d_2>0$ such that the above inequality is satisfied, making thus the matrix $M(d)$ Hurwitz stable. We have hence proved that for any given pair $(\mu_*,\sigma_*^2)\in\mathcal{A}$, there exists a control law (\ref{eq:clb}) that makes the corresponding equilibrium locally asymptotically stable. In fact, it is possible to show that there exists a pair $(d_2,d_8)\in\mathbb{R}^2$, $d_2d_8\ne0$ such that for all $(\mu_*,\sigma_*^2)\in\mathcal{A}$ the inequality (\ref{eq:lastcond}) is satisfied. This is equivalent to finding a finite $d_2>0$ satisfying
\begin{equation}
d_2>d_8\left(2-\sup_{(\mu_*,\sigma_*^2)\in\mathcal{A}}\left\{\frac{\gamma_p(\mu_*-\sigma_*^2)^2}{k_p\mu_*\sigma_*^2}\right\}\right).
\end{equation}
Standard analysis allows us to prove that
\begin{equation} \sup_{(\mu_*,\sigma_*^2)\in\mathcal{A}}\left\{\frac{\gamma_p(\mu_*-\sigma_*^2)^2}{k_p\mu_*\sigma_*^2}\right\}=\dfrac{k_p}{\gamma_p+k_p}\in(0,1)
\end{equation}
which shows that by simply choosing the directions $d_8<0$ and $d_2>0$, the matrix $M(d)$ hence becomes Hurwitz stable for all $(\mu_*,\sigma_*^2)\in\mathcal{A}$.
\end{proof}

\blue{\subsection{Example}\label{sec:ex2}

We consider here a PI controller with gains $k_1=1$, $k_2=0.007$, $k_7=-0.2$ and $k_8=-0.0014$ (all the other gains are set to 0). 
The response of the controlled mean and variance according to changes in their reference value is depicted in Fig. \ref{fig:var1} where we can see that both the mean and the variance converge to their respective set-points.  It seems also important to point out that when the reference point changes, due to the coupling between the mean and variance, the mean value changes as well, but this is immediately corrected by the mean controller. For all those set-points, it can be verified that the linearized system is locally exponentially stable.}

\begin{figure}[h]
\centering
\includegraphics[width=0.8\textwidth]{./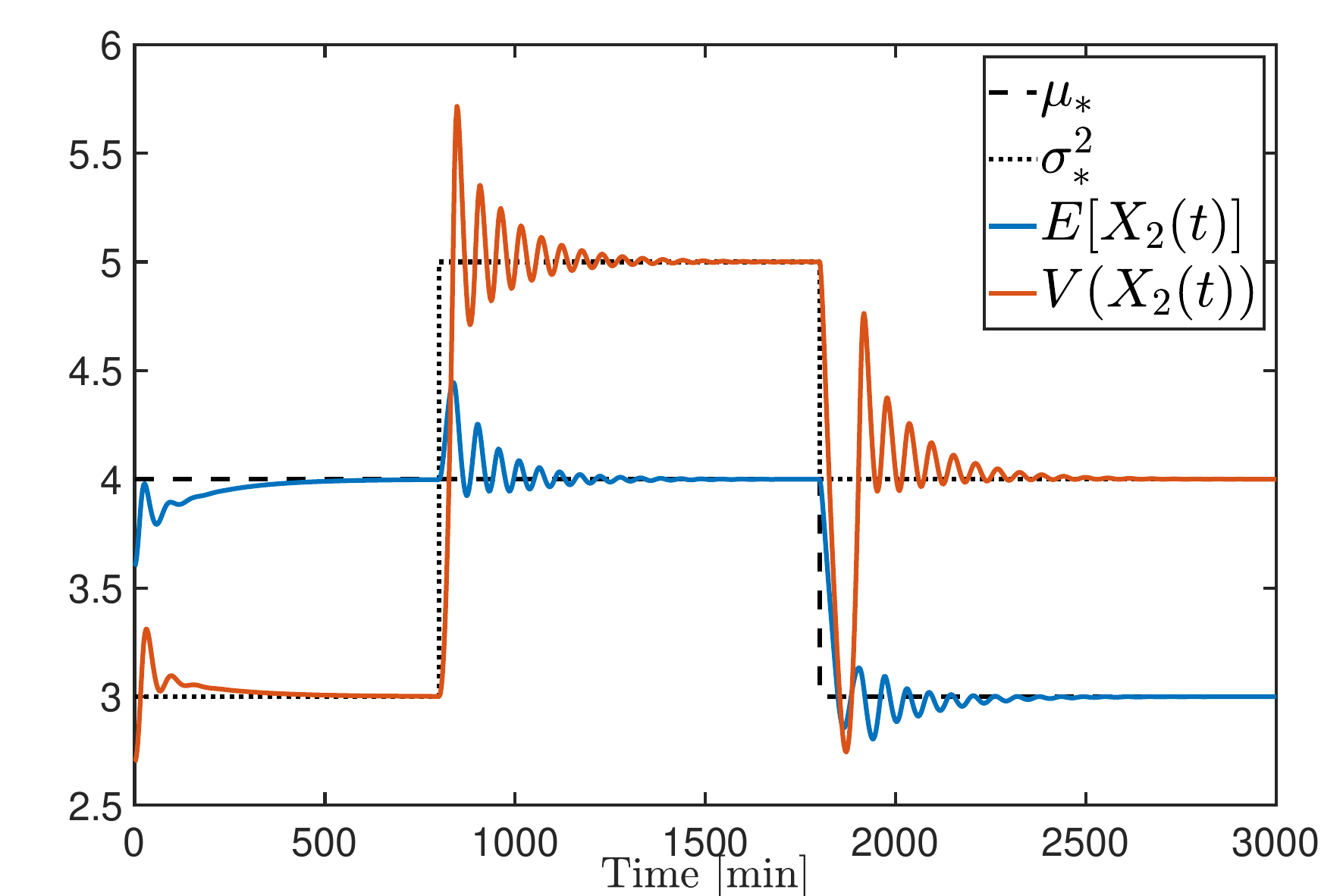}
\caption{Trajectories of the protein mean and variance subject to changes in their references.}\label{fig:var1}
\end{figure}

%
%
%
%
%


\section{Mean control of the dimer in the gene expression network with dimerization}\label{sec:dimer}

\subsection{The model}

We consider the following model for the gene expression network with protein dimerization
\begin{equation}\label{eq:reacnet}
\begin{array}{lcccllcccl}
   R_1&:&\phib&\stackrel{k_1}{\longrightarrow}&\X{1},&  R_2&:&\X{1}+\X{1}&\stackrel{b}{\longrightarrow}&\X{2},\\
   R_3&:&\X{1}&\stackrel{\gamma_1}{\longrightarrow}&\phib,&  R_4&:&\X{2}&\stackrel{\gamma_2}{\longrightarrow}&\phib
 \end{array}
\end{equation}
in which the protein $\X{1}$ dimerizes into $\X{2}$ at rate $b$. 
For this network, the moments equation writes
\begin{equation}\label{eq:syst}
  \begin{array}{lcl}
    \dot{x}_1(t)&=&k_1+(b-\gamma_1)x_1(t)-bx_1(t)^2-bv(t)\\
    \dot{x}_2(t)&=&-\dfrac{b}{2}x_1(t)-\gamma_2 x_2(t)+\dfrac{b}{2}x_1(t)^2+\dfrac{b}{2}v(t)
  \end{array}
\end{equation}
where $x_i(t):=\E[X_i(t)]$, $i=1,2$, and $v(t):=V(X_1(t))$ is the variance of the random variable $X_1(t)$. We can immediately observe that the set of equations is not closed because of the presence of the variance acting as a \blue{disturbance} to the system. Another fundamental difference with the moments equation in the unimolecular case is that the above one is nonlinear, which adds a layer of complexity to the problem.

\subsection{Main difficulties}\label{sec:diff}

In spite of being simple, the  network \eqref{eq:reacnet} presents all the difficulties that can arise in bimolecular reaction networks and is a good candidate for emphasizing that moment control problem may remain solvable when the moments equations are not closed. The first issue is that the  system \eqref{eq:syst} has the variance $v(t):=V(X_1(t))$ as \blue{disturbance} signal and it is not known, a priori, whether it is bounded over time or even asymptotically converging to a finite value $v^*$. The second one is that the system \eqref{eq:syst} is nonlinear and nonlinear terms can not be neglected as they may enhance certain properties such as stability. It will be shown later that this is actually the case for system \eqref{eq:syst}. Finally, the last one arises from the fact that, due to our complete ignorance in the value of $v^*$ (if it exists), the system \eqref{eq:syst} exhibits an infinite number of equilibrium points. Understand this, however, as an artefact arising from the definition of the model \eqref{eq:syst} since the first-order moments may, in fact, have a unique stationary value.

\subsection{Preliminary results}

The following result proves a crucial stability property for our process:
\begin{theorem}\label{th:ergodicd}
  For any value of the network parameters $k_1,b,\gamma_1$ and $\gamma_2$, the reaction network \eqref{eq:reacnet} is exponentially ergodic and has all its moments bounded and globally exponentially converging. As a result, there exists a unique $v^*\ge0$ such that for any initial condition $X_0$, we have that $v(t)\to v^*$ as $t\to\infty$.\mendth
  %
\end{theorem}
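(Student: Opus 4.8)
The plan is to establish exponential ergodicity by exhibiting a Foster--Lyapunov (geometric drift) function for the continuous-time Markov chain $(X_1(t),X_2(t))$ on $\mathbb{Z}_{\ge0}^2$, and then to upgrade distributional convergence into convergence of all moments. The decisive observation is that, although the dimerization reaction $R_2$ has the unbounded bimolecular propensity $\tfrac{b}{2}X_1(X_1-1)$, it \emph{conserves} the total monomer content $W:=X_1+2X_2$: indeed $R_2$ sends $(X_1,X_2)\mapsto(X_1-2,X_2+1)$, leaving $W$ unchanged, so only the unimolecular/constant reactions $R_1,R_3,R_4$ act on $W$. Writing $\mathcal{A}$ for the generator of the chain, a one-line computation gives
\begin{equation*}
\mathcal{A}W = k_1-\gamma_1 X_1-2\gamma_2 X_2\le k_1-\gamma_{\min}W,\qquad \gamma_{\min}:=\min\{\gamma_1,\gamma_2\},
\end{equation*}
a linear geometric drift inequality valid for \emph{every} value of the parameters (notably independent of $b$). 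Since $W\to\infty$ as $\|(X_1,X_2)\|\to\infty$ while $\mathcal{A}W\le k_1$ everywhere, the chain is non-explosive.

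First I would verify irreducibility: from $(0,0)$ one reaches any $(m,n)$ by firing $R_1$ twice then $R_2$ (repeated $n$ times) to create the dimers and then $R_1$ $m$ times, while $R_3,R_4$ return any state to $(0,0)$; hence $\mathbb{Z}_{\ge0}^2$ is a single communicating class and every finite set---in particular each sublevel set $\{W\le R\}$---is petite. Combining irreducibility, non-explosivity, and the drift inequality written as $\mathcal{A}W\le-\alpha W+\beta\mathbf{1}_{\{W\le R\}}$ with the Meyn--Tweedie / Down--Meyn--Tweedie exponential-ergodicity theorem yields a unique stationary distribution $\pi$ together with $W$-geometric convergence to it.

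To obtain boundedness and exponential convergence of all moments I would iterate the same idea with $V_p:=W^{p}$, $p\in\mathbb{N}$. Because $R_2$ leaves $W$ invariant, $\mathcal{A}(W^p)$ is again driven only by $R_1,R_3,R_4$; expanding $(W\pm j)^p-W^p$ produces the leading term $pW^{p-1}(k_1-\gamma_1X_1-2\gamma_2X_2)\le pk_1W^{p-1}-p\gamma_{\min}W^p$, while the remaining terms are $O(W^{p-1})$ after bounding the rates by $\gamma_{\max}W$, so that $\mathcal{A}(W^p)\le-\alpha_pW^p+\beta_p$ for suitable positive $\alpha_p,\beta_p$. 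This furnishes a uniform-in-time bound on $\E[W(t)^p]$ for every $p$, hence on all mixed moments of $X_1,X_2$ (as $0\le X_1,X_2\le W$), and, through $W^p$-geometric ergodicity, exponential convergence of $\E_x[f(X(t))]$ to $\pi(f)$ for every $f$ dominated by $W^p$. Since any polynomial of degree $\le p$ is so dominated and $p$ is arbitrary, all moments converge exponentially. In particular $\E[X_1(t)]$ and $\E[X_1(t)^2]$ converge, so $v(t)=\E[X_1(t)^2]-\E[X_1(t)]^2\to v^*:=\E_\pi[X_1^2]-\E_\pi[X_1]^2$, with $v^*\ge0$ as a limit of variances and unique by uniqueness of $\pi$.

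The main obstacle is conceptual rather than computational: the bimolecular propensity makes a naive coordinatewise Lyapunov analysis fail (for instance $\mathcal{A}(X_2^2)$ contains the destabilizing cross term $\sim X_1^2X_2$), and it is precisely the conserved-mass coordinate $W$ that linearizes the drift and decouples the estimate. The only remaining care is the standard passage from $V$-weighted geometric ergodicity to convergence of moments, which requires choosing the Lyapunov exponent $p$ strictly above the order of the moment of interest so that the $V$-dominated convergence in the ergodic theorem (equivalently, uniform integrability) applies.
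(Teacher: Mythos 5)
Your proof is correct, and its linchpin is exactly the paper's: your conserved quantity $W = X_1 + 2X_2$ is precisely the weighted population $V(x)=\nu^T x$ with $\nu=(1,2)^T$ that the paper chooses, so that the bimolecular propensity $\tfrac{b}{2}x_1(x_1-1)$ multiplies $\nu^T s_2 = -2+2 = 0$ and drops out of the drift, yielding the same inequality $\mathbb{A}V(x) = k_1-\gamma_1 x_1 - 2\gamma_2 x_2 \le k_1 - \min\{\gamma_1,\gamma_2\}V(x)$, together with the same irreducibility check. Where you genuinely diverge is in the machinery converting this drift into the full conclusion. The paper invokes the packaged result of \cite{Briat:13i}, recalled as Theorem~\ref{th:ergodic} in the Appendix: besides the linear drift condition \eqref{eq:dd1} it verifies the second-difference condition \eqref{eq:dd2}, which for $\nu=(1,2)^T$ reads $\mathbb{A}V(x)^2-(\mathbb{A}V(x))^2 = k_1+\gamma_1 x_1+4\gamma_2 x_2 \le k_1+\max\{\gamma_1,2\gamma_2\}V(x)$, and those two algebraic checks on a \emph{linear} Lyapunov function deliver exponential ergodicity plus boundedness and exponential convergence of all moments in one stroke. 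You instead stay within classical Down--Meyn--Tweedie theory and bootstrap by hand through the polynomial hierarchy $W^p$, exploiting the invariance of $W$ under $R_2$ at every order to get $\mathcal{A}(W^p)\le -\alpha_p W^p+\beta_p$, then passing from $W^p$-geometric ergodicity to convergence of moments of each fixed order. Your route is more self-contained --- it needs only the standard exponential-ergodicity theorem rather than the specialized moment-convergence result of \cite{Briat:13i} --- at the cost of the order-by-order expansion and the uniform-integrability bookkeeping; the paper's route is shorter and gets all moments simultaneously from a single pair of conditions on first and second differences. Both arguments reach the same endpoint, including $v(t)\to v^*$ with $v^*$ unique, via exponential convergence of the first two moments of $X_1$ and uniqueness of the stationary distribution.
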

\begin{proof}
The proof is based on Proposition 10 in \cite{Briat:13i}. Indeed, the Foster-Lyapunov function $V(x)=\begin{bmatrix}
  1 & 2
\end{bmatrix}x$ satisfies the conditions in Proposition 10 and we have that
 \begin{equation}\label{eq:AV}
    \mathbb{A}V(x)   \le  c_1-c_2V(x)
  \end{equation}
  where  $c_1=k_1$ and $c_2=\min\{\gamma_1,\gamma_2\}$.
\end{proof}

The above result provides an answer to the first difficulty mentioned in Section \ref{sec:diff}. It indeed states that, for any parameter configuration, all the moments are bounded and exponentially converging to a unique stationary value.

\blue{The next step consists of choosing a suitable control input, that is, a control input from which any reference value $\mu_*$ for $x_2$ can be tracked. We propose to use the production rate $k_1$ as control input, a choice justified by the following result which states that all the stationary moments are continuous functions of the input:
\begin{proposition}\label{ass:2}
 Let $(i,j)\in\mathbb{Z}_{\ge0}^2$. Then, for the reaction network  \eqref{eq:reacnet}, the map
  \begin{equation}
    m_{i,j}:k_1\mapsto \E_\pi[X_1^iX_2^j]
  \end{equation}
is continuous where $\E_\pi[\cdot]$ denotes the expectation at stationarity.
\end{proposition}
\begin{proof}
  We actually prove the stronger result that the map $ m_{i,j}$ is differentiable for any $(i,j)\in\mathbb{Z}_{\ge0}^2$. To show that, we use \cite[Theorem 3.2]{Gupta:14b} in the particular case described in Section 3.1 of the same reference. All the conditions (which are too numerous to be all recalled here) of this theorem are trivially satisfied for the considered reaction network and the functions $m_{i,j}$'s, which all remain bounded on the state-space by virtue of Theorem \ref{th:ergodicd}. As a result, the differentiability of the map follows.
\end{proof}}
%


We can now state the following result:
\begin{proposition}\label{prop:mu}
 For any $\mu_*>0$, there exists a $k_1=k_1(\mu_*)>0$ such that we have $x_2^*:=\E_\pi[X_2]=\mu_*$.\mendth
\end{proposition}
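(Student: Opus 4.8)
The plan is to regard the unique stationary value $x_2^*$ as a function of the control input $k_1$ and to show, via the intermediate value theorem, that this function is continuous and sweeps all of $(0,\infty)$ as $k_1$ ranges over $(0,\infty)$. Theorem \ref{th:ergodicd} is what makes this well-posed: for every fixed $k_1>0$ the process is ergodic, so there exist unique nonnegative stationary values $x_1^*$, $x_2^*$ and $v^*$, and we may therefore treat $k_1\mapsto x_2^*(k_1)$ as a genuine single-valued function on $(0,\infty)$.

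First I would extract the equilibrium relations by setting the right-hand sides of \eqref{eq:syst} to zero. Writing $S^*:=x_1^{*2}-x_1^*+v^*=\E[X_1(X_1-1)]\ge 0$, the first equation gives $k_1-\gamma_1 x_1^*-bS^*=0$ and the second gives $\gamma_2 x_2^*=\tfrac{b}{2}S^*$. Eliminating $S^*$ yields the two identities
\begin{equation*}
  x_2^*=\dfrac{b}{2\gamma_2}S^*=\dfrac{k_1-\gamma_1 x_1^*}{2\gamma_2}.
\end{equation*}
Continuity of $x_2^*$ in $k_1$ is then immediate from the left-hand identity together with Assumption \ref{ass:2}, which asserts precisely that $S^*$ depends continuously on $k_1$.

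It then remains to control the two boundary behaviours. For $k_1\to 0^+$, the nonnegativity $x_1^*\ge 0$ gives $0\le x_2^*=\tfrac{k_1-\gamma_1 x_1^*}{2\gamma_2}\le \tfrac{k_1}{2\gamma_2}\to 0$, so $x_2^*(k_1)\to 0$. For $k_1\to\infty$, I would use $v^*\ge 0$ to bound $S^*\ge x_1^{*2}-x_1^*$, whence $k_1=bS^*+\gamma_1 x_1^*\ge b x_1^{*2}+(\gamma_1-b)x_1^*$; solving this quadratic inequality shows $x_1^*=O(\sqrt{k_1})$, so that $\gamma_1 x_1^*=o(k_1)$ and hence $x_2^*=\tfrac{k_1-\gamma_1 x_1^*}{2\gamma_2}\to\infty$. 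Having continuity on $(0,\infty)$ together with the limits $0$ and $+\infty$, the intermediate value theorem delivers, for every $\mu>0$, a value $k_1=k_1(\mu)>0$ with $x_2^*(k_1)=\mu$.

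The step I expect to carry the real weight is the continuity, and this is exactly what Assumption \ref{ass:2} is introduced to supply: as the surrounding discussion stresses, continuity of the stationary moments in $k_1$ cannot be inherited from continuity of the time-dependent distribution, since a pointwise limit of continuous functions need not be continuous, so it must be posited rather than derived. The boundary limits, by contrast, are elementary once the a priori bound $x_1^*=O(\sqrt{k_1})$ is in hand; this bound is the only quantitative estimate really needed, and it is precisely what guarantees that the linear growth of $k_1$ dominates the sublinear growth of the monomer mean and thereby forces $x_2^*$ to infinity.
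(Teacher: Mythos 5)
Your proposal is correct and follows essentially the same route as the paper: the same reformulation of the equilibrium equations through $S^*=x_1^{*2}-x_1^*+v^*$, the same reliance on Assumption \ref{ass:2} to supply continuity in $k_1$, and the same endpoint-behaviour-plus-intermediate-value-theorem structure. The only differences are minor streamlinings within that route: your sandwich $0\le x_2^*\le k_1/(2\gamma_2)$ as $k_1\to0^+$ replaces the paper's Foster--Lyapunov moment bound (via $\lim_{t\to\infty}\E[X_1(t)]\le c_1/c_2$) at $k_1=0$, and your direct quadratic estimate $x_1^*=O(\sqrt{k_1})$ replaces the paper's proof by contradiction that $S^*$ cannot remain uniformly bounded, both steps resting on the same Jensen-type inequality $v^*\ge0$.
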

\begin{proof}
The question that has to be answered is whether for any $\mu_*$ the set of equations
\begin{equation}\label{eq:dksldksld}
  \begin{array}{rcl}
     k_1+(b-\gamma_1)x_1^*-bx_1^{*2}-bv^*&=&0\\
    -\dfrac{b}{2}x_1^*-\gamma_2\mu_*+\dfrac{b}{2}x_1^{*2}+\dfrac{b}{2}v^*&=&0
  \end{array}
\end{equation}
has a solution in terms of $k_1$ and $x_1^*$, where $x_1^*$ and $v^*$ are equilibrium values for $x_1$ and $v$. In the following, we define $S(t):=x_1(t)^{2}-x_1(t)+v(t)$ and let $S^*=S^*(k_1)=\E_\pi[X_1(X_1-1)]$ be its value at equilibrium that satisfies the first equation of the system \eqref{eq:dksldksld}. In this respect, the above equations can be rewritten as
\begin{equation}\label{eq:dskldjsldsld5678}
  \begin{array}{rcl}
     k_1-\gamma_1x_1^*-bS^*&=&0\\
     -\gamma_2\mu_*+\dfrac{b}{2}S^*&=&0.
  \end{array}
\end{equation}
Based on the above reformulation, we can clearly see that if we can set $S^*$ to any value by a suitable choice of $k_1$, then any $\mu_*$ can be achieved. We prove this in what follows.

\textbf{Step 1.} First of all, we have to show that when $k_1=0$, we have that $S^*=0$ and $x_1^*=0$. This is immediate from the fact that  $(0,0)$ is an absorbing state for the Markov process describing the network.
%
%
%

\textbf{Step 2.} We show now that when $k_1$ grows unbounded, then $S^*$ grows unbounded as well. To do so, let us focus on the first equation of \eqref{eq:dskldjsldsld5678}. Two options: either both $x_1^*$ and $S^*$ tend to infinity, or only one of them grows unbounded and the other remains bounded. We show that $S^*$ has to grow unbounded. Let us assume that $S^*=S^*(k_1)$ is uniformly bounded in $k_1$, i.e. there exists $\bar{S}>0$ such that $S^*\in[0,\bar{S}]$ for all $k_1\ge0$. Then, from the first equation of \eqref{eq:dskldjsldsld5678}, we have that $x_1^*=(k_1-bS^*)/\gamma_1$ and thus  $x_1^*\ge\bar{x}_1:=(k_1-b\bar{S})/\gamma_1$ for all $k_1\ge0$. Hence, $x_1^*$ grows unbounded as $k_1$ increases to infinity. From Jensen's inequality, we have that $S^*\ge x_1^*(x_1^*-1)$. Noting then that for the function $f(x):=x(x-1)$, we have that $f(y)\ge f(x)$ for all $y\ge x$, $x\ge1$, we can state that
\begin{equation}
  \bar{x}_1(\bar{x}_1-1)\le x_1^*(x_1^*-1)\le S^*\le\bar{S}
\end{equation}
for all $k_1>0$ such that $\bar{x}_1\ge1$. It is now clear that for any $\bar{S}>0$, there exists $k_1>0$ such that the above inequality is violated since $f(\bar{x}_1)$ can be made arbitrarily large. Therefore, $S^*$ must go to infinity as $k_1$ goes to infinity.

\blue{Using Proposition \ref{ass:2}, we have that the map $k_1\mapsto \E_\pi[X_1(X_1-1)]=S^*(k_1)$ is continuous. As a result, we can conclude that for any $\mu_*>0$, there will exist $k_1>0$, such that we have $x_2^*=\mu_*$.  The proof is complete.}
\end{proof}
\subsection{Nominal stabilization result}

From the results stated in Theorem \ref{th:ergodicd} and Proposition \ref{prop:mu}, it seems reasonable to consider a pure integral control law since exponential stability nominally holds and only tracking is necessary. Therefore, we propose that $k_1$ be actuated as
\begin{equation}\label{eq:cl}
\begin{array}{rcl}
  \dot{I}(t)&=&\mu_*-x_2(t)\\
    k_1(t)&=&k_c\varphi(I(t))
\end{array}
\end{equation}
where $k_c>0$ is the gain of the controller, $\mu_*$ is the reference to track and $\varphi(y):=\max\{0,y\}$. We are now in position to state the main result of the paper:
\begin{theorem}[Main stabilization result]\label{th:main}
  For any finite positive constants $\gamma_1,\gamma_2,b,\mu_*$ and any controller gain $k_c$ satisfying
  \begin{equation}\label{eq:kcbound}
  0<k_c<2\gamma_2\left(2\gamma_1+\gamma_2+2\sqrt{\gamma_1(\gamma_1+\gamma_2)}\right),
\end{equation}
  the closed-loop system \eqref{eq:syst}-\eqref{eq:cl} has a unique locally exponentially stable equilibrium point $(x_1^*,x_2^*,I^*)$ in the positive orthant such that $x_2^*=\mu_*$.
The equilibrium variance moreover satisfies
\begin{equation*}
\begin{array}{lcr}
   \quad\quad\quad\quad\quad\quad&v^*\in\left(0,\dfrac{2\gamma_2\mu_*}{b}+\dfrac{1}{4}\right].& \quad\quad\quad\quad\quad\quad\vartriangle
\end{array}
\end{equation*}
\end{theorem}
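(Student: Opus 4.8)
The plan is to treat the three assertions separately: the existence, uniqueness and location of the equilibrium, the two-sided bound on $v^*$, and the local exponential stability together with the explicit gain bound.

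\emph{Equilibrium, uniqueness and variance bound.} I would first work in the unsaturated regime where $k_1=k_cI$ and verify this a posteriori. Setting $\dot I=0$ in \eqref{eq:cl} gives $x_2^*=\mu$. Writing $S^*:=x_1^{*2}-x_1^*+v^*$ and setting $\dot x_2=0$ in \eqref{eq:syst} yields $\tfrac{b}{2}S^*=\gamma_2\mu$, i.e. $S^*=2\gamma_2\mu/b$. Substituting this into $\dot x_1=0$ collapses that equation to $k_cI^*=\gamma_1x_1^*+2\gamma_2\mu>0$, so $I^*>0$ and $\varphi$ is indeed inactive at equilibrium, as assumed. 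Uniqueness is then inherited from Theorem~\ref{th:ergodicd}: the process is ergodic with a unique stationary law, so $x_1^*$ and $v^*$ are uniquely determined, and hence so are $x_2^*$ and $I^*$ (existence of a $k_1(\mu)$ realizing $x_2^*=\mu$ being exactly Proposition~\ref{prop:mu}). The variance bound then follows instantly from $S^*=2\gamma_2\mu/b$, since $v^*=\tfrac{2\gamma_2\mu}{b}-x_1^*(x_1^*-1)=\tfrac{2\gamma_2\mu}{b}+\tfrac14-(x_1^*-\tfrac12)^2\le \tfrac{2\gamma_2\mu}{b}+\tfrac14$; the lower bound $v^*>0$ is just the strict positivity of the variance of the non-degenerate stationary distribution.

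\emph{Local stability and the gain bound.} Since $v(t)\to v^*$ exponentially by Theorem~\ref{th:ergodicd}, I would regard $v(t)-v^*$ as an exponentially vanishing perturbation and linearize the nominal closed loop (with $v$ frozen at $v^*$) about $(x_1^*,\mu,I^*)$. Writing $z:=2b(x_1^*-\tfrac12)$, the characteristic polynomial of the $3\times3$ Jacobian takes the form $s^3+c_2s^2+c_1s+c_0$ with $c_2=\gamma_1+\gamma_2+z$, $c_1=\gamma_2(\gamma_1+z)$ and $c_0=k_cz/2$. When $z>0$ the Routh--Hurwitz test reduces to the single inequality $k_c<g(z):=2\gamma_2(\gamma_1+z)(\gamma_1+\gamma_2+z)/z$. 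Because the actual $z$ is not known a priori (it depends on the unknown $v^*$), I would instead impose $k_c<\min_{z>0}g(z)$; writing $g(z)=2\gamma_2\big(\gamma_1(\gamma_1+\gamma_2)/z+z+2\gamma_1+\gamma_2\big)$ and applying AM--GM to the first two terms shows the minimum is attained at $z=\sqrt{\gamma_1(\gamma_1+\gamma_2)}$ and equals $2\gamma_2\big(2\gamma_1+\gamma_2+2\sqrt{\gamma_1(\gamma_1+\gamma_2)}\big)$, which is precisely \eqref{eq:kcbound}. Hence this bound enforces stability simultaneously for every admissible reference.

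\emph{Main obstacle.} The crux is the sign condition $x_1^*>\tfrac12$, equivalently $z>0$, which is exactly what $c_0>0$ demands and on which the whole Routh--Hurwitz argument hinges; the moment identities alone leave the sign of $x_1^*-\tfrac12$ undetermined. I expect to resolve it by identifying the physical equilibrium with the upper branch $x_1^*=\tfrac12\big(1+\sqrt{1+4(2\gamma_2\mu/b-v^*)}\big)$, on which $x_1^*\ge\tfrac12$ automatically (and $>\tfrac12$ whenever $v^*$ lies strictly below its upper bound), the lower branch being a spurious unstable equilibrium of the closure-free equations that the ergodic process never realizes. Rigorously justifying both this branch selection and the frozen-$v$ reduction in the presence of moment non-closure is the delicate part; once they are in place, the Jacobian, Routh--Hurwitz and AM--GM computations are routine.
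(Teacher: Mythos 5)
Your proposal is correct and follows essentially the same route as the paper's proof: equilibrium analysis yielding $S^*=2\gamma_2\mu/b$ and $I^*=(\gamma_1x_1^*+\gamma_2\mu)/k_c>0$, Routh--Hurwitz on the $3\times3$ Jacobian, and a uniform gain bound obtained by minimizing the resulting function of the unknown equilibrium. Your single parametrization $z=2b\bigl(x_1^*-\tfrac12\bigr)=b\sqrt{\Delta}$ neatly subsumes the paper's three-case analysis (Cases 1--3, organized around the sign of $v^*-2\gamma_2\mu/b$ with $\Delta$ as in \eqref{eq:delta}), your completed square $v^*=\tfrac{2\gamma_2\mu}{b}+\tfrac14-\bigl(x_1^*-\tfrac12\bigr)^2$ is an equivalent and slightly cleaner derivation of the variance bound than the paper's discriminant/Descartes argument, and AM--GM replaces the paper's calculus in minimizing $g(z)$, landing at the same minimum $f^*$ in \eqref{eq:kcbound}. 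One clarification concerning the obstacle you flag: no branch-selection argument is actually required for the theorem as stated, because it asserts uniqueness only of the \emph{locally exponentially stable} equilibrium, not that the stochastic process realizes the upper branch. The paper disposes of the lower branch (and of $x_1^*=0$ in its Case 2) exactly as your own computation already does: when $z<0$ the constant coefficient $c_0=k_cz/2$ of the characteristic polynomial is negative, forcing a positive real root and hence instability, so the upper branch is the only stable candidate. Likewise, the ``frozen-$v$'' reduction is handled in the paper just as you propose: $\tilde v(t)=v(t)-v^*$ enters the linearization as an exogenous input, local exponential stability refers to the unforced state matrix, and the convergence $v(t)\to v^*$ is supplied by Theorem \ref{th:ergodicd}; your ergodicity-based uniqueness remark mirrors the paper's embedding of the true equilibrium in the $v^*$-parametrized family, at the same level of rigor.
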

\begin{proof}
\textbf{Location of the equilibrium points and local stability results.} We know from Proposition \ref{prop:mu} that for any $\mu_*>0$, the set of equations \eqref{eq:dksldksld}  has solutions in terms of the equilibrium values $x_1^*,x_2^*,I^*$ and $v^*$. Adding two times the second equation to the first one and multiplying the second one by $2/b$, we get that
\begin{equation}
  \begin{array}{rcl}
    k_cI^*-\gamma_1 x_1^*-\gamma_2\mu_*&=&0\\
    x_1^{*2}-x_1^*+v^*-\dfrac{2\gamma_2\mu_*}{b}&=&0.
  \end{array}
\end{equation}
The first equation immediately leads to $ I^*=(\gamma_1 x_1^*+\gamma_2\mu_*)/k_c$ which is positive for all $\mu_*>0$. This also means that $x_1^*$ is completely characterized by the equation
\begin{equation}
    x_1^{*2}-x_1^*+v^*-\dfrac{2\gamma_2\mu_*}{b}=0.
\end{equation}
The goal now is to determine the location of the solutions $x_1^*$ to the above equation where $v^*$ is viewed as an unknown parameter, reflecting our complete ignorance on the value $v^*$. We therefore embed the actual unique equilibrium point (from ergodicity and moments convergence) in a set having elements parametrized by $v^*\ge0$. The equation to be solved is quadratic, and it is a straightforward implication of Descartes' rule of signs \cite{Khovanskii:91} that we have three distinct cases:
\begin{itemize}
  \item[1)] If $v^*-2\gamma_2\mu_*/b<0$, then we have one positive equilibrium point.
  \item[2)] If $v^*-2\gamma_2\mu_*/b=0$, then we have one equilibrium point at zero, and one which is positive.
  \item[3)] If $v^*-2\gamma_2\mu_*/b>0$, then we have either 2 complex conjugate equilibrium points, or 2 positive equilibrium points.
\end{itemize}

\noindent  \textbf{Case 1:} This case holds whenever $v^*\in\left[0,\frac{2\gamma_2\mu_*}{b}\right)$
and the only positive solution for $x_1^*$ is given by $\textstyle{x_1^*=\frac{1}{2}\left(1+\sqrt{\Delta}\right)}$ where
\begin{equation}\label{eq:delta}
\Delta=1+4\left(\dfrac{2\gamma_2\mu_*}{b}-v^*\right)>1.
\end{equation}
The equilibrium point is therefore given by
\begin{equation}
  z^*=\left[\dfrac{1}{2}\left(1+\sqrt{\Delta}\right),\mu_*, \dfrac{\gamma_2\mu_*+\gamma_1x_1^*}{k_c}\right].
\end{equation}
The linearized system around this equilibrium point reads
\begin{equation}
  \dot{\tilde{z}}(t)=\begin{bmatrix}
    -\gamma_1-b\sqrt{\Delta} & 0 & k_c\\
    b\sqrt{\Delta}/2 & -\gamma_2 & 0\\
    0 & -1 & 0
  \end{bmatrix}\tilde{x}(t)+\begin{bmatrix}
      -b\\
      b/2\\
      0
    \end{bmatrix}\tilde{v}(t)
\end{equation}
where $\tilde{z}(t):=z(t)-z^*$, $z(t):=\col(x(t),I(t))$ and $\tilde{v}(t):=v(t)-v^*$. The Routh-Hurwitz criterion allows us to derive the stability condition
\begin{equation}\label{eq:condKc1}
  0<k_c<\dfrac{2\gamma_2(\gamma_1+\gamma_2+b\sqrt{\Delta})(\gamma_1+b\sqrt{\Delta})}{b\sqrt{\Delta}}.
\end{equation}
  \noindent  \textbf{Case 2:} In this case, we have $v^*=2\gamma_2\mu_*/b$ and is rather pathological but should be addressed for completeness. Let us consider first the equilibrium point $x_1^*=0$ giving $z^*=\begin{bmatrix}
    0 & \mu_* & \gamma_2\mu_*/k_c
  \end{bmatrix}$. The linearized dynamics of the system around this equilibrium point is given by
\begin{equation}
  \dot{\tilde{z}}(t)=\begin{bmatrix}
    b-\gamma_1 & 0 & k_c\\
    -b/2 & -\gamma_2 & 0\\
    0 & -1 & 0
  \end{bmatrix}\tilde{z}(t)+\begin{bmatrix}
      -b\\
      b/2\\
      0
    \end{bmatrix}\tilde{v}(t).
\end{equation}
Since the determinant of the system matrix is positive, this equilibrium point is unstable. Considering now the equilibrium point $x_1^*=1$ and, thus, $ z^*=\begin{bmatrix}
    1 & \mu_* & (\gamma_2\mu_*+\gamma_1)/k_c
  \end{bmatrix}$, we get the linearized system
\begin{equation}
   \dot{\tilde{z}}(t)=\begin{bmatrix}
    -b-\gamma_1 & 0 & k_c\\
    b/2 & -\gamma_2 & 0\\
    0 & -1 & 0
  \end{bmatrix}\tilde{z}(t)+\begin{bmatrix}
      -b\\
      b/2\\
      0
    \end{bmatrix}\tilde{v}(t).
\end{equation}
which is exponentially stable provided that
\begin{equation}\label{eq:condKc2}
  k_c<\dfrac{2\gamma_2(\gamma_1+\gamma_2+b)(\gamma_1+b)}{b}.
\end{equation}
  \noindent  \textbf{Case 3:} This case corresponds to when $v^*>2\gamma_2\mu_*/b$. However, this condition is not sufficient for having positive equilibrium points and we must add the constraint $v^*<2\gamma_2\mu_*/b+1/4$ in order to have a real solutions for $x_1^*$. When the above conditions hold, the equilibrium points are given by
\begin{equation}
  z_\pm^*=\begin{bmatrix}
    \dfrac{1}{2}\left(1\pm\sqrt{\Delta}\right) & \mu_* & \dfrac{\gamma_2\mu_*+\gamma_1x_1^*}{k_c}
  \end{bmatrix}
\end{equation}
where  $\Delta$ is defined in \eqref{eq:delta}. Similarly to as previously, the equilibrium point $z_-^*$ can be shown to be always unstable and the equilibrium point $z^*_+$ exponentially stable provided that
\begin{equation}\label{eq:condKc3}
  k_c<\dfrac{2\gamma_2(\gamma_1+\gamma_2+b\sqrt{\Delta})(\gamma_1+b\sqrt{\Delta})}{b\sqrt{\Delta}}.
\end{equation}
Note that if the discriminant $\Delta$ was equal to 0, the system would be unstable.

\noindent  \textbf{Bounds on the variance.} We have thus shown that to have a unique positive locally exponentially stable equilibrium point, we necessarily have an equilibrium variance $v^*$ within the interval $ v^*\in\left(0,2\gamma_2\mu_*/b+1/4\right].$ If $v^*$ is greater than the upper-bound of this interval, the system does not admit any real equilibrium points.

\noindent  \textbf{Uniform controller bound.} The last part concerns the derivation of a uniform condition on the gain of the controller $k_c$ such that all the positive equilibrium points that can be locally stable are stable. In order words, we want to unify the conditions \eqref{eq:condKc1}, \eqref{eq:condKc2} and \eqref{eq:condKc3} all together. Noting that these conditions can be condensed to $k_c<f(\sqrt{\Delta})$ where
\begin{equation}
  f(\zeta):=\dfrac{2\gamma_2(\gamma_1+\gamma_2+b\zeta)(\gamma_1+b\zeta)}{b\zeta}.
\end{equation}
Moreover, since $\mu_*>0$ can be arbitrarily large (and thus $\Delta$ may take any nonnegative value), the worst case bound for $k_c$ coincides with the minimum of the above function for $\zeta\ge0$. Standard calculations show that the minimizer is  given by $\textstyle \zeta^*=(\gamma_1(\gamma_1+\gamma_2))^{1/2}/b$ and the minimum $f^*:=f(\zeta^*)$ is therefore given by
\begin{equation}
  f^*=2\gamma_2\left(2\gamma_1+\gamma_2+2\sqrt{\gamma_1(\gamma_1+\gamma_2)}\right).
\end{equation}
The proof is complete.
\end{proof}

The above result states two important facts that must be emphasized. First of all, the condition on the controller gain is uniform over $\mu_*>0$ and $b>0$, and is therefore valid for any combination of these parameters. This also means that a single controller, which locally stabilizes all the possible equilibrium points, is easy to design for this network. Second, the proof of the theorem provides an explicit construction of an upper-bound on the equilibrium variance $v^*$, which turns out to be a linearly increasing function of $\mu_*$. This upper-bound is, moreover, tight when regarded as a condition on the equilibrium points of the system since, when the equilibrium variance $v^*$ is greater than $2\gamma_2\mu_*/b+1/4$, the system does not admit any real equilibrium point.

\subsection{Robust stabilization result}

Let us consider the following set
\begin{equation}
\mathcal{P}:=[\gamma_1^-,\gamma_1^+]\times[\gamma_2^-,\gamma_2^+]\times[b^-,b^+]
\end{equation}
defined for some appropriate positive real numbers $\gamma_1^-<\gamma_1^+$, $\gamma_2^-<\gamma_2^+$ and $b^-<b^+$. We get the following generalization of Theorem \ref{th:main}:
\begin{theorem}[Robust stabilization result]
Assume the controller gain $k_c$ verifies
  \begin{equation}
0<k_c<2\gamma_2^-\left(2\gamma_1^-+\gamma_2^-+2\sqrt{\gamma_1^-(\gamma_1^-+\gamma_2^-)}\right).
\end{equation}
Then, for all $(\gamma_1,\gamma_2,b)\in\mathcal{P}$, the closed-loop system \eqref{eq:syst}-\eqref{eq:cl} has a unique locally stable equilibrium point $(x_1^*,x_2^*,I^*)$ in the positive orthant such that $x_2^*=\mu_*$. The equilibrium variance $v^*$, moreover, satisfies
\begin{equation*}
\begin{array}{lcr}
  \quad\quad\quad\quad\quad\quad& v^*\in\left(0,\dfrac{2\gamma_2^+\mu_*}{b^-}+\dfrac{1}{4}\right]. & \quad\quad\quad\quad\quad\quad\vartriangle
\end{array}
\end{equation*}
\end{theorem}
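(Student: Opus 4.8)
The plan is to leverage the nominal result (Theorem \ref{th:main}) by reducing the robust statement to a uniform-in-parameters version of the nominal analysis. The key observation is that all the structural conclusions of Theorem \ref{th:main} --- existence of a unique positive locally exponentially stable equilibrium with $x_2^*=\mu$, and the variance confinement $v^*\in(0,2\gamma_2\mu/b+1/4]$ --- rest entirely on two ingredients: first, the ergodicity and moment-convergence guaranteed by Theorem \ref{th:ergodicd}, which holds \emph{for any} positive parameter values and hence trivially for every $(\gamma_1,\gamma_2,b)\in\mathcal{P}$; and second, the scalar gain inequality $k_c<f(\sqrt{\Delta})$ with $f(\zeta)=2\gamma_2(\gamma_1+\gamma_2+b\zeta)(\gamma_1+b\zeta)/(b\zeta)$. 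So the entire robustness question collapses to showing that the prescribed bound on $k_c$ lies below $f^*:=\min_{\zeta\ge0}f(\zeta)$ \emph{simultaneously} for all admissible parameters, i.e. below $\inf_{(\gamma_1,\gamma_2,b)\in\mathcal{P}}f^*(\gamma_1,\gamma_2,b)$.

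First I would restate Proposition \ref{prop:mu} and Theorem \ref{th:ergodicd} pointwise: for each fixed triple in $\mathcal{P}$ there is a $k_1(\mu)>0$ achieving $x_2^*=\mu$, a unique stationary variance $v^*$, and the equilibrium structure (Cases 1--3) of the nominal proof carries over verbatim since it only used positivity of the parameters. This immediately yields, for each fixed triple, the local exponential stability of the unique positive equilibrium under the condition $k_c<f^*(\gamma_1,\gamma_2,b)$, together with the variance bound $v^*\le 2\gamma_2\mu/b+1/4$. The second step is then the uniformization: I would show that $f^*(\gamma_1,\gamma_2,b)=2\gamma_2(2\gamma_1+\gamma_2+2\sqrt{\gamma_1(\gamma_1+\gamma_2)})$ is monotonically \emph{increasing} in each of $\gamma_1$ and $\gamma_2$ (each summand $2\gamma_1$, $\gamma_2$, and $2\sqrt{\gamma_1(\gamma_1+\gamma_2)}$ is increasing, and the prefactor $2\gamma_2$ is increasing), and is \emph{independent} of $b$. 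Consequently its infimum over $\mathcal{P}$ is attained at $\gamma_1=\gamma_1^-$, $\gamma_2=\gamma_2^-$, giving exactly the stated threshold $2\gamma_2^-(2\gamma_1^-+\gamma_2^-+2\sqrt{\gamma_1^-(\gamma_1^-+\gamma_2^-)})$. Hence $k_c$ below this threshold guarantees $k_c<f^*$ for every admissible triple, and local exponential stability holds robustly.

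For the variance bound, I would apply the same extremum argument used in Lemma \ref{lem:robloc}: the interval endpoint $2\gamma_2\mu/b+1/4$ is increasing in $\gamma_2$ and decreasing in $b$, so its supremum over $\mathcal{P}$ is $2\gamma_2^+\mu/b^-+1/4$, which is precisely the claimed robust upper bound; since the pointwise confinement $v^*\le 2\gamma_2\mu/b+1/4$ holds for the true parameters, it is subsumed by this uniform bound.

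The only genuinely delicate point --- and the step I expect to require the most care --- is the monotonicity/independence verification of $f^*$, specifically confirming that $b$ drops out after minimization (it appears only through $\zeta$, and the minimizing $\zeta^*=\sqrt{\gamma_1(\gamma_1+\gamma_2)}/b$ is chosen to cancel it) and that the $\sqrt{\gamma_1(\gamma_1+\gamma_2)}$ term is jointly monotone in the right direction. Everything else is either inherited directly from the nominal theorem or reduces to the elementary sup/inf extremum reasoning already employed earlier in the paper, so I would keep those parts brief and concentrate the written proof on the two extremum computations.
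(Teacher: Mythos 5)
Your proposal is correct and follows essentially the same route as the paper's (much terser) proof: the nominal theorem is applied pointwise over $\mathcal{P}$, and the uniform gain condition follows because $f^*(\gamma_1,\gamma_2)=2\gamma_2\left(2\gamma_1+\gamma_2+2\sqrt{\gamma_1(\gamma_1+\gamma_2)}\right)$ is increasing in both $\gamma_1$ and $\gamma_2$ and independent of $b$, so its infimum over $\mathcal{P}$ is attained at $(\gamma_1^-,\gamma_2^-)$. Your extremum treatment of the variance bound (increasing in $\gamma_2$, decreasing in $b$, giving the worst case $2\gamma_2^+\mu/b^-+1/4$) is exactly the paper's ``similar argument,'' merely spelled out in more detail, including the useful observation that $b$ cancels through the minimizer $\zeta^*=\sqrt{\gamma_1(\gamma_1+\gamma_2)}/b$.
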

\begin{proof}
  The upper bound on the controller gain is a strictly increasing function of $\gamma_1$ and $\gamma_2$, and the most constraining value (smallest) is therefore attained at $\gamma_1=\gamma_1^-$ and $\gamma_2=\gamma_2^-$. A similar argument is applied to the variance upper-bound.
\end{proof}

\subsection{Example}\label{sec:ex}

%

Let us consider in this section the stochastic reaction network \eqref{eq:reacnet} with parameters $b=3$, $\gamma_1=2$ and $\gamma_2=1$. From condition \eqref{eq:kcbound}, we get that $k_c$ must satisfy $  0<k_c< 19.798$ to have local stability of the unique equilibrium point in the positive orthant. We then run Algorithm 1 with the controller gain $k_c=1$, a sampling period of $T_s=10$ms, the reference $\mu_*=5$, the controller initial condition $I(0)=0$, a population of $N=10000$ cells and initial conditions $x_0^i$ randomized in $\{0,1\}^2$, $i=1,\ldots,N$. The simulation results are depicted in Fig. \ref{fig:average} to \ref{fig:variancedu}. We can clearly see in Fig.~\ref{fig:average} that $\E[X_2(t)]$ tracks the reference $\mu_*$ reasonably well. The variance of $X_1(t)$, plotted in Fig. \ref{fig:variance}, is also verified to lie within the theoretically determined range of values. We indeed have $V(X_1(t))\simeq1.5$ in the stationary regime whereas the upper-bound is equal to $3+7/12\simeq3.583$. Moreover, since the variance at equilibrium is smaller than $\frac{2\gamma_2\mu_*}{b}=10/3$, we then have $v-2\gamma_2\mu_*/b<0$ and therefore case 1) holds in the proof of Theorem \ref{th:main}. It is, however, unclear whether this is also the case for any combination of network parameters. We can see in Fig. \ref{fig:averagedu} that the apparition of a constant input disturbance of amplitude 15 at $t=15$ is efficiently taken care of by the controller.

\begin{figure}[H]
\centering
  \includegraphics[width=0.8\textwidth]{./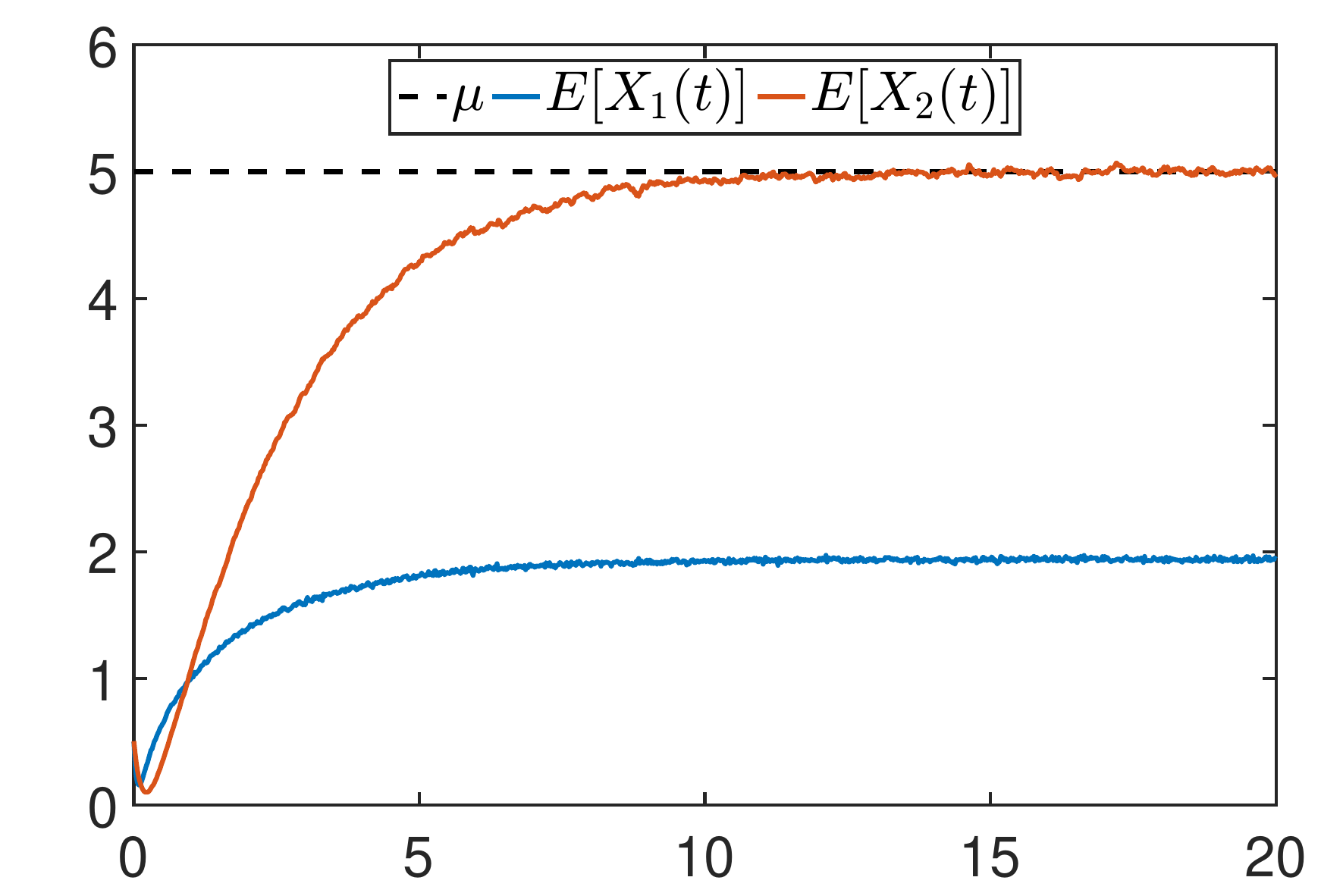}
  \caption{Evolution of the proteins averages in a population of 10000 cells (no disturbance)}\label{fig:average}
\end{figure}

\begin{figure}[H]
\centering
\includegraphics[width=0.8\textwidth]{./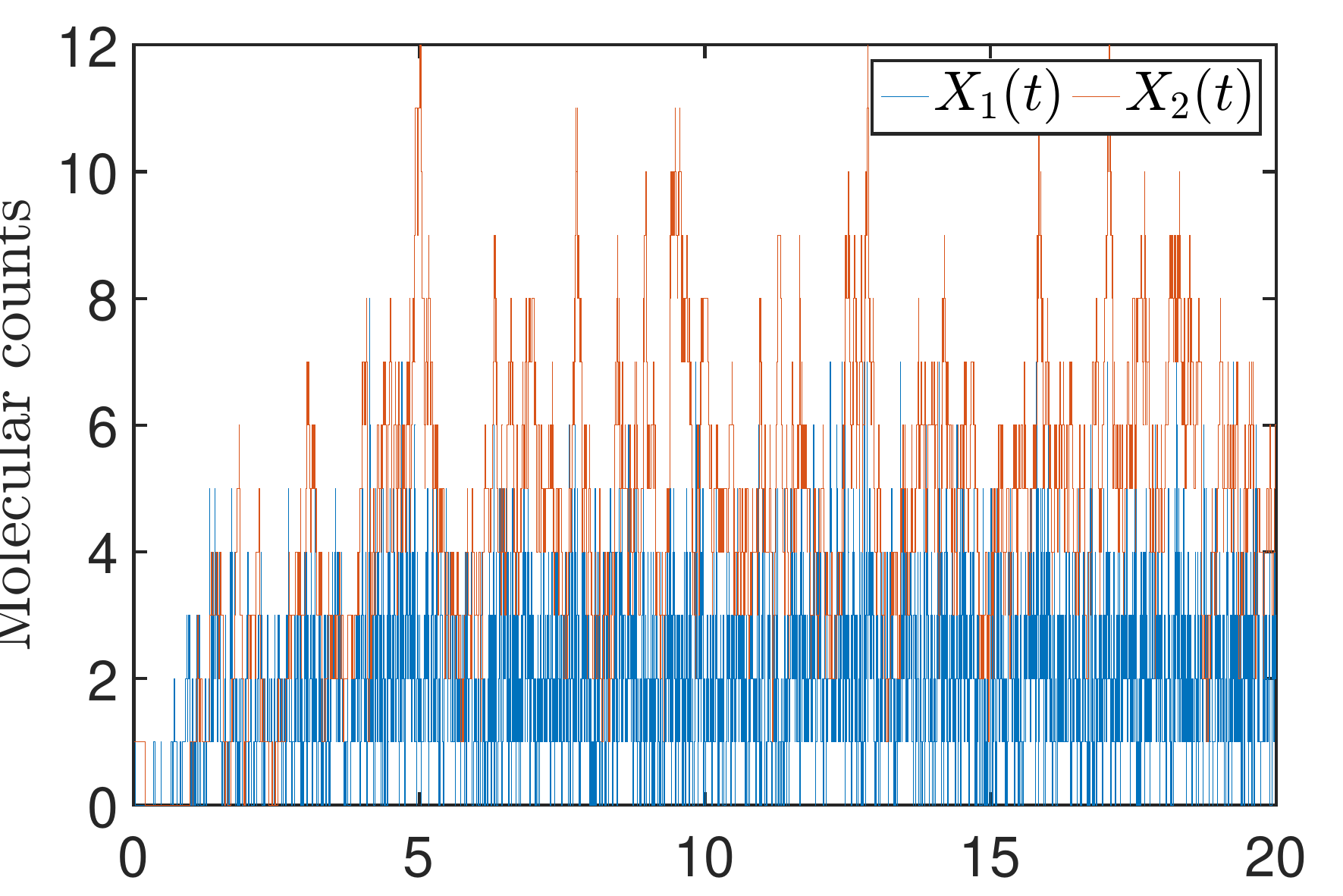}
  \caption{Evolution of proteins populations in a single cell (no disturbance)}\label{fig:cell}
\end{figure}

\begin{figure}[H]
\centering
\includegraphics[width=0.8\textwidth]{./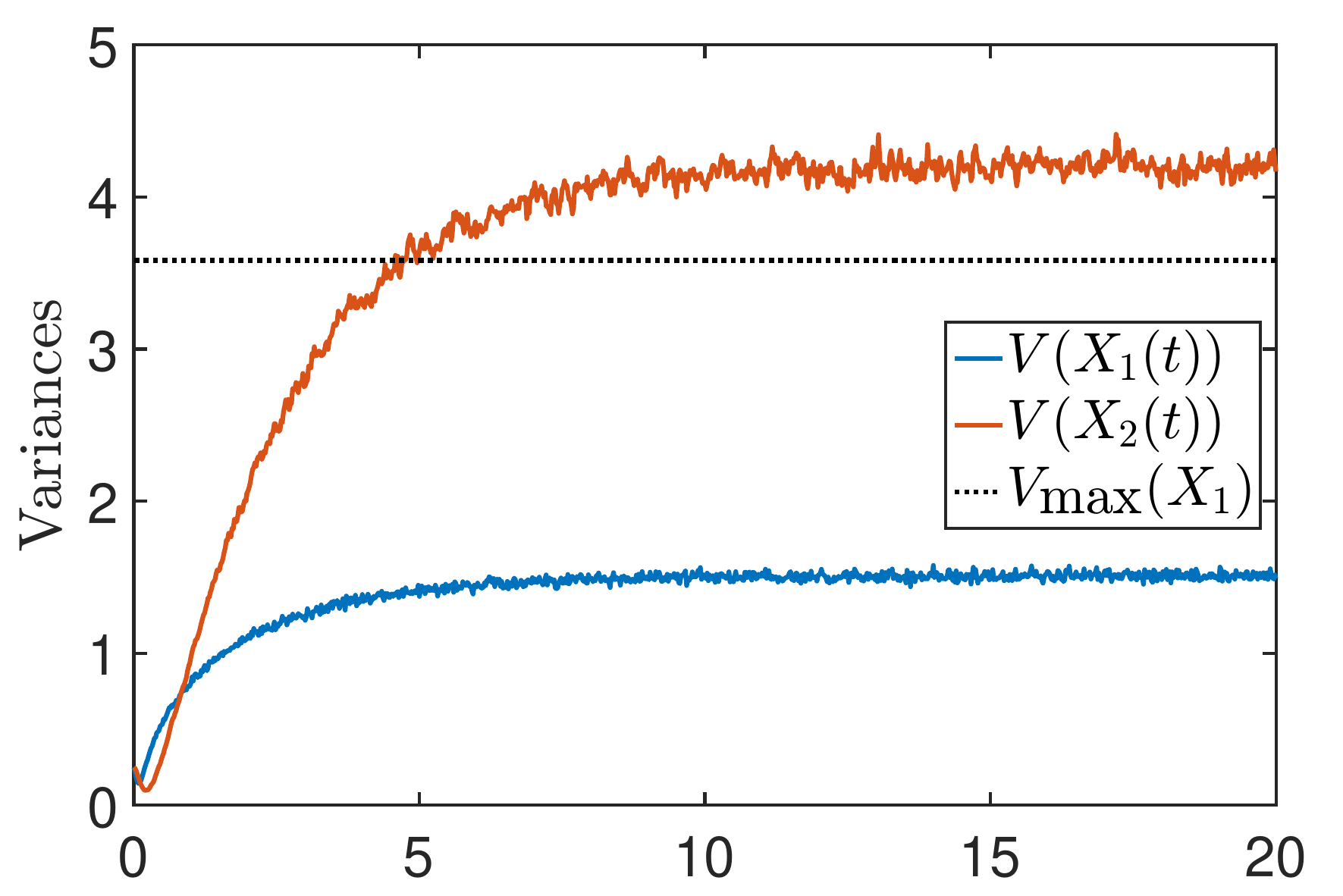}
  \caption{Evolution of the variances (no disturbance). }\label{fig:variance}
\end{figure}
%

\begin{figure}[H]
\centering
  \includegraphics[width=0.8\textwidth]{./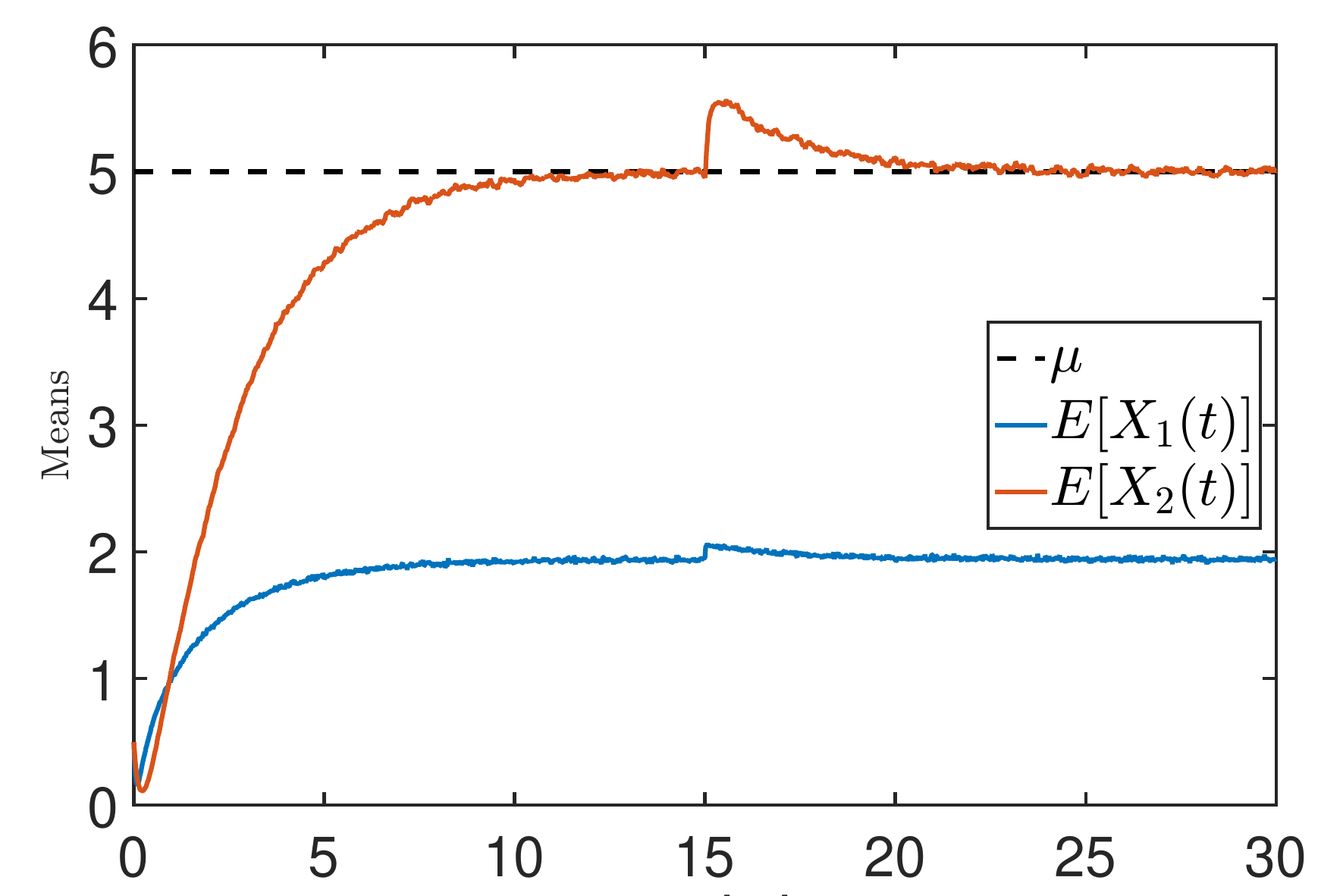}
  \caption{Evolution of the proteins averages in a population of 10000 cells (with input disturbance)}\label{fig:averagedu}
\end{figure}

\begin{figure}[H]
\centering
\includegraphics[width=0.8\textwidth]{./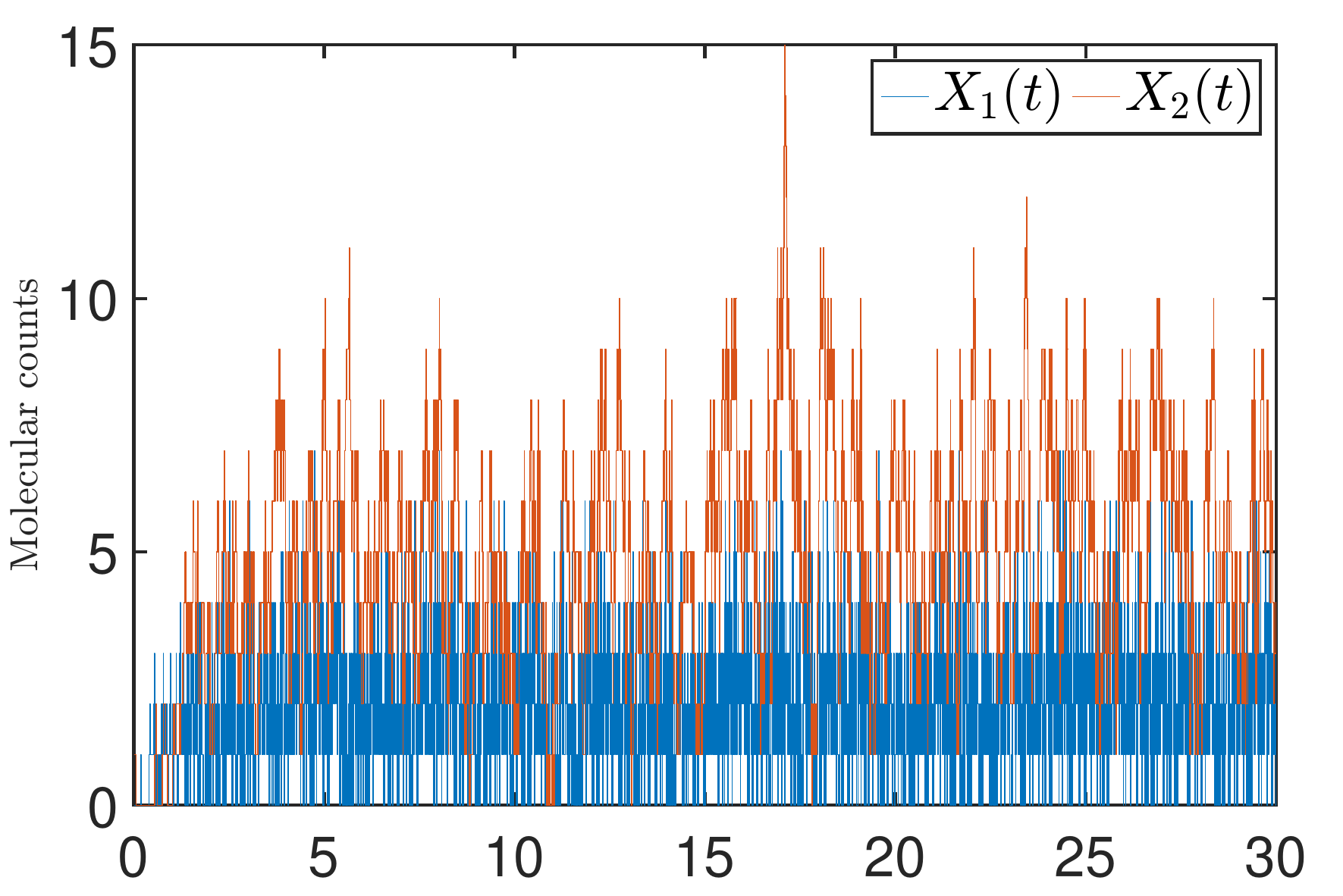}
  \caption{Evolution of proteins populations in a single cell (with input disturbance)}\label{fig:celldu}
\end{figure}

\begin{figure}[H]
\centering
\includegraphics[width=0.8\textwidth]{./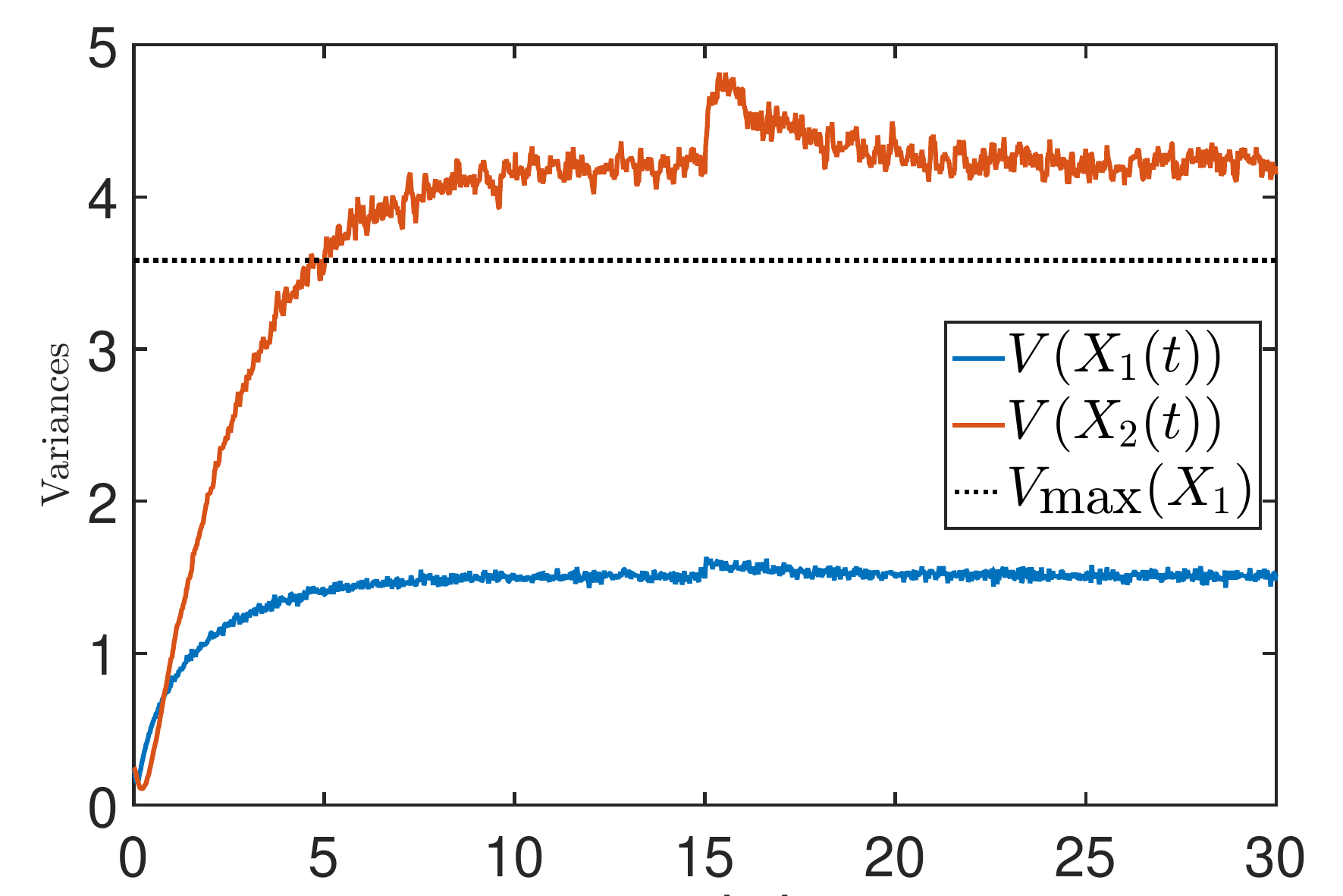}
  \caption{Evolution of the variances (with input disturbance).}\label{fig:variancedu}
\end{figure}

\end{document}